\theoremstyle{plain}
\newtheorem{theorem}{Theorem}
\newtheorem{corollary}[theorem]{Corollary}
\newtheorem{lemma}[theorem]{Lemma}
\newtheorem{prop}[theorem]{Proposition}
\theoremstyle{definition}
\newtheorem{definition}{Definition}
\newtheorem{example}[definition]{Example}
\theoremstyle{remark}
\newtheorem{remark}[definition]{Remark}
\definecolor{myblue}{HTML}{0021A5}
\definecolor{myorange}{HTML}{FA4616}
\title{Combinatorial Aspects of the Card Game War}
\author{Tanya Khovanova \and Atharva Pathak}
\date{}
\begin{document}

\maketitle

\begin{abstract}
This paper studies a single-suit version of the card game War on a finite deck of cards. There are varying methods of how players put the cards that they win back into their hands, but we primarily consider randomly putting the cards back and deterministically always putting the winning card before the losing card. The concept of a \emph{passthrough} is defined, which refers to a player playing through all cards in their hand from a particular point in the game. We consider games in which the second player wins during their first passthrough.
    
We introduce several combinatorial objects related to the game: game graphs, win-loss sequences, win-loss binary trees, and game posets. We show how these objects relate to each other. We enumerate states depending on the number of rounds and the number of passthroughs.
\end{abstract}

\section{Introduction} \label{introduction}

Card games are interesting to children and mathematicians alike. Games like poker and blackjack are fascinating applications of game theory, probability, and statistics. Bayer and Diaconis's research into card shuffling \cite{diaconisdovetail} has had practical applications in casinos, summarized in Diaconis's discussion with Haran \cite{numberphile}. There is also ongoing research in games where one player must sequentially guess cards in a deck given varying shuffling methods and varying information feedback regarding the guesses \cite{ciucunofeedback, diaconis2020card, diaconis2020guessing, liu2020card}.

This paper focuses on the card game War, for which some previous research exists. Haqq-Misra \cite{predictability} finds a linear regression on the chance of a player winning a game on a standard 52-card deck given two initial statistics: the difference in the  sum of card values for the two players and the number of rounds a player wins during the first 26 rounds. Lakshtanov and Roshchina \cite{finiteness} show that, with random putback of cards, the expected duration of the game is finite. Alexeev and Tsimerman \cite{alexeev2010analysis} investigate a game similar to War, where instead players randomly draw cards from their hands, and the player with the higher card in a round wins the lower card but discards the higher card. Ben-Naim and Krapivsky \cite{parityandruin} consider a similar stochastic model of War where cards are drawn randomly from the players' hands, but the winner of a round keeps both cards. They find the expected length of a game on $N$ cards is $\mathcal{O}(N^2)$ or $\mathcal{O}(N^2 \log N)$ depending on the initial advantage of the eventual winner. Spivey \cite{Spivey2010} investigates and classifies some types of cycles in games of War given deterministic putback.

We note that previous analysis in War generally applies to variations where the players randomly draw cards from their hands \cite{alexeev2010analysis, parityandruin} or are statistical analyses of large decks \cite{predictability}. We focus our analysis on standard rules of War in situations in which one player has many more cards than the opponent. In contrast to Spivey \cite{Spivey2010}, we focus on situations where one player wins, rather than games that cycle. 

Section~\ref{sec:preliminaries} defines the rules of War, the notation for how we represent games, and some additional definitions. We say the two players are Alice and Bob. We assume that our deck contains $n$ cards of one suit. A single-use game is a game where Bob wins while going through his initial hand. We only consider single-use games for the remainder of the paper. We define passthrough as a player going through the cards in their hand from a certain point in the game. In a single-use game, the game ends on Bob's first passtrough. The number of passthroughs for Alice is an important parameter in our calculations. We describe two ways that players may put cards they win back into their hand: WL-putback, where the winning card is put before the losing card, and random putback, where the order is chosen at random. 
Section \ref{sec:blocks_and_wlbt} introduces the game graph. The game graph has cards as vertices, and cards are connected if they play against each other. We prove that the game graph of a single-use game is a forest. We introduce the notion of blocks, which are irreducible components of the game.

In Section~\ref{sec:winlosssequences&bt} we introduce win-loss sequences that describe the results of every round and prove that the number of win-loss sequences corresponding to a single-use game is an entry in the Catalan triangle. We also prove a recursion for the number of win-loss sequences corresponding to games where Alice undergoes at most $k$ passthroughs. We describe a bijection between win-loss sequences corresponding to $k$-passthroughs and binary trees of height $k$. We show how to build the game graph given the win-loss sequence.

In Section~\ref{sec:countinggames} we find the probability that a randomly chosen initial state for a single-use game leads to a game with (1) exactly $R$ rounds or (2) at most $k$ passthroughs for Alice. We show that these probabilities are the same whether the game is played with random putback or WL-putback, which is a nontrivial result. 

In Section~\ref{sec:wlgamedigraphs} we discuss a way of giving direction to edges in the game graph such that it becomes a poset on the cards. Using these posets, we enumerate the number of initial states that necessarily follow a given win-loss sequence, given WL-putback. 

In Section~\ref{sec:randomputbackposet} we introduce posets related to random putback. We enumerate states that necessarily follow a given win-loss sequence with random putback by building a poset on the cards and counting its linear extensions.

\section{Preliminaries and Definitions} \label{sec:preliminaries}

The game of War is played between two players, whom we refer to as Alice and Bob. A deck of $n$ cards, labeled 1 through $n$, is divided between them, with their hands face down. A \emph{round} consists of both players revealing the top card from their hands, with the player whose card was higher collecting both cards to the bottom of their hand. The player who loses all their cards first loses the game. 

We consider two paradigms for how players put the cards they win in a round back to the bottom of their hand, which we call the \emph{putback}. Players may put the cards back \emph{randomly}, with Alice's card and Bob's card in either order, or deterministically. There are a few plausible ways of deterministically putting cards back, but we only consider \emph{WL-putback}, where the winning card is put back first and the losing card is put back second.

We represent a \emph{state} of the game by a string of the form $a_1 \dots a_i | a_{i+1} \dots a_n$, where $a_1 \dots a_i$ represents Alice's hand from top to bottom, $a_{i+1} \dots a_n$ represents Bob's hand from top to bottom, and the vertical bar character $|$ denotes the separation between the two players' hands. The values of these cards range from $1$ to $n$, as stated earlier. 

\begin{example}
Consider the game with initial state $2|13$ under random putback. Alice wins the first round because her card $2$ is greater than Bob's $1$. She randomly puts the cards back into her hand, so the new state is either $12|3$ or $21|3$. In both these cases, Bob wins the second round because he has the highest card of $3$, and then the possible states are $2|13, 2|31, 1|23, 1|32$. The first of these four cases is the initial state that we analyzed previously. In the latter three of the four cases, Bob wins the next round and, therefore, the game.
\end{example}

The player with the highest card can never lose in single-suit War. However, we do see that a loop is possible, with the states $12|3$ and $2|13$ potentially going to each other indefinitely long.

It is then immediate that there are $(n+1)!$ states of the game: each permutation of the bar and the numbers $1$ through $n$ represents a unique state of the game. This number includes states where the bar is at the beginning, corresponding to Bob winning, and states where the bar is at the end, corresponding to Alice winning.

We define an \emph{$m$-card state} as a state where Alice has $m$ cards. We refer to 1-card states as \emph{unicard states}. Note that there are $n!$ $m$-card states for every $m$.

The concept of a \emph{passthrough} is vital to our discussion throughout the paper. Given a state where a player has $m$ cards, we say that their hand undergoes a passthrough after $m$ rounds, and that these $m$ rounds occur during the passthrough. The concept of a passthrough encodes when a player's hand has been ``used up". We normally refer to passthroughs from the initial game state.

A \emph{single-use game} is a game in which Bob wins during his first passthrough. A game that ends within $k$ passthroughs for Alice is a \emph{$k$-passthrough game}. A game that ends in exactly $R$ rounds is a \emph{$R$-round game}. (Note the difference between ending \emph{within} $k$ passthroughs and ending in \emph{exactly} $R$ rounds.) 

For simplicity, we sometimes implicitly refer to the game played from some initial state as simply that state. For example, when we say a state is $R$-round, we mean the game initialized from that state is $R$-round.

We will need some standard definitions in combinatorics. To recall, a \textit{full binary tree} is a tree in which every vertex other than the leaves has two children.

The entry $C(n, k)$ of the Catalan triangle is defined as the number of strings of $n$ $U$'s and $k$ $D$'s such that each initial substring has at least as many $U$'s as $D$'s \cite{catalantriangle}. It is known that \[C(n, k) = \binom{n+k}{k} - \binom{n+k}{k-1} =  \frac{n+1-k}{n+1} \binom{n+k}{k}.\]
We use $C_r$ to denote the $r$th Catalan number $\frac{1}{r+1}\binom{2r}{r}$. The Catalan numbers correspond to the diagonal of the Catalan triangle: $C_r = C(r, r)$.

\section{Game Graphs and Blocks} \label{sec:blocks_and_wlbt}

We construct a \emph{game graph} as follows. We begin with vertices for each of the $n$ cards. As we watch the game progress, we add edges between cards that play each other. Game graphs can have multiple edges if a pair of cards played against each other multiple times, but cannot have loops.

\begin{prop}
The game graph of a single-use game is a forest with $m$ nontrivial trees and $x$ isolated vertices, where $m$ is the number of Alice's initial cards and $x$ is the number of Bob's cards that never get played.
\end{prop}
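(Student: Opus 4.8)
The plan is to exploit the single-use hypothesis to pin down exactly which card each player plays in each round, and then to build the game graph one edge at a time, showing that every new edge attaches to a previously-untouched vertex. First I would record the basic dynamics: since Bob wins during his first passthrough, the game lasts some number $R$ of rounds with $R \le n-m$, and in round $t$ Bob plays his $t$-th initial card $b_t$ (Bob advances through his hand one card per round whether he wins or loses that round, and his first passthrough is not yet complete). In particular the cards $b_1,\dots,b_R$ are distinct, and each round contributes exactly one edge, so the graph has precisely $R$ edges and, since each edge has a distinct Bob-endpoint, no multiple edges.

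The key step, and the one I expect to be the main obstacle, is acyclicity. I would prove it by adding the edges in round order and arguing that the endpoint $b_t$ is \emph{fresh}: it is incident to no earlier edge. This holds because $b_t$ sits untouched in Bob's hand until round $t$ (Bob plays his cards in order), so before round $t$ it has never entered Alice's hand and hence could not have been played by either player. An edge one of whose endpoints is currently isolated cannot close a cycle, so adding all $R$ edges in order never creates a cycle, and the game graph is a forest.

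Next I would characterize the isolated vertices. A card leaves Alice's hand only by losing a round (it then passes to Bob and, in a single-use game, is never played again), whereas cards Alice wins go to the bottom of her hand and stay. Since Bob wins the game, Alice's hand is empty at the end, so every card ever in it, in particular each of her $m$ initial cards, must have been played; thus no initial Alice card is isolated. Every played Bob card has degree at least $1$, while every unplayed Bob card has degree $0$. Hence the isolated vertices are exactly Bob's $x = (n-m)-R$ unplayed cards, and the non-isolated vertices are the $m$ initial Alice cards together with the $R$ played Bob cards, totaling $m+R$.

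Finally I would count the nontrivial trees using the forest relation that the number of components equals the number of vertices minus the number of edges: restricting to the non-isolated part, a subforest with $m+R$ vertices and $R$ edges, the number of nontrivial trees is $(m+R)-R = m$, as claimed. The only genuinely game-theoretic inputs are the freshness of each $b_t$ and the fact that all of Alice's initial cards are played; the remainder is bookkeeping with Euler's formula for forests.
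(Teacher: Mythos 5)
Your proof is correct and rests on the same key observation as the paper's: because the game is single-use, the card Bob plays in each round is fresh (never previously played), so every new edge attaches to an isolated vertex and can never close a cycle. The only cosmetic difference is in the final bookkeeping, where you count the $m$ nontrivial trees via the forest identity (components $=$ vertices $-$ edges) on the non-isolated part, while the paper instead tracks directly that each of Alice's $m$ initial cards grows its own tree by accretion of fresh vertices from Bob's hand.
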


\begin{proof}
We initialize the game graph with all cards as nodes, and then we add edges corresponding to rounds. We color nodes for each of Alice's $m$ cards as amber and Bob's as blue. In every subsequent round, an amber card will play against a blue card, and we add an edge between them and recolor the blue card as amber. 
Therefore, amber connected components grow only by the addition of blue vertices, so they stay disconnected and grow as trees. Moreover, each of Alice's initial cards plays at least once, so they are all in nontrivial trees. Cards that never get played for Bob remain as isolated blue vertices. 
\end{proof}

For a single-use game, the game graph decomposes into connected components. Treating cards outside a particular component as invisible, the component behaves as its own independent sub-game of War. We call these subgames \emph{blocks}. The components that are not isolated vertices contain exactly one card from initial Alice's deck. This is why the unicard states play an important role: they are the building blocks of single-use games. 

There are two ways we can turn our game graph into a digraph. We can give direction to edges from the card that won the round to the card that lost the round, or we can give direction to edges from the card Alice played in the round to the card Bob played in the round. 

The first method turns the game graph into a poset, where greater cards point towards lesser cards. We call such posets \textit{winner-to-loser game digraphs}. These posets will be explored in greater depth in Section~\ref{sec:wlgamedigraphs}. 

We call the digraph resulting from the second method an \textit{Alice-to-Bob game digraph}. In an Alice-to-Bob game digraph, the $m$ directed trees have unique roots. These unique roots are Alice's $m$ initial cards, because the trees were built starting from her initial cards and adding edges pointing to cards Bob played. 

Consider what the game graph would have been if we started tracking rounds from some later point in the game. If we built the game graph starting from round $i$ instead of round 1, the effect would be to delete the edges corresponding to rounds 1 to $i-1$. In the Alice-to-Bob digraph starting from round $i$, connected components are trees whose roots are the cards Alice had at that point in the game. We refer to subgames involving cards and rounds in each of these trees as \emph{subblocks} of the original game. We say these subblocks are \emph{induced} by Alice's cards which are the roots of the trees. A card induces a subblock each time it plays, so we always specify the round from which the subblock we want to discuss is begun.

Note that although a unicard game consists of only one block, it immediately separates into two subblocks after a first round win for Alice. In particular, from the state $a|bcd\dots$, if Alice wins the first round and uses WL-putback, the state becomes $ab|cd\dots$, and from this point on $a$ and $b$ induce subblocks.

\section{Win-loss Sequences and Binary Trees}\label{sec:winlosssequences&bt}

\subsection{Win-loss sequences}

A \textit{win-loss sequence} is a string $x_1 x_2 \dots$ of $W$s and $L$s describing the progress of the game from the point of view of Alice. A $W$ represents a round Alice wins, and an $L$ represents a round Alice loses. We use $R$ to denote the total number of rounds, $w_i$ and $\ell_i$ to denote the numbers of $W$s and $L$s within the first $i$ rounds, and we say $w = w_R$ and $\ell = \ell_R$. As a reminder, we only consider games where Alice loses. 

We can ``stylize'' a win-loss sequence with forward slashes to separate passthroughs of Alice's hand. For example, if Alice started with a single card, the win-loss sequence $WLL$ would be stylized as $W/LL$ because the first round constitutes Alice's first passthrough.

The number of letters before the first slash is the number of Alice's cards at the start. This is the same as the number of rounds in the first passthrough. After that, the number of letters between slashes is twice the number of $W$'s in the previous passthrough. As we only consider finite games, the rounds in the final passthrough are all $L$'s. Since each $W$ nets $+1$ cards for Alice and each $L$ nets $-1$ cards for Alice, the number of cards she has after $i$ rounds is $m + w_i - \ell_i$. She must have a positive number of cards until the end of the game when she has none, so for $1 \leq i < R$ we have $m + w_i - \ell_i > 0$ and $m + w_R - \ell_R = m + w - \ell = 0$.

Note that $w_i + \ell_i = i$, so $w + \ell = R$, and combining this with $m + w - \ell = 0$ finds that $m$ and $R$ have the same parity. We also trivially have $R \geq m$.

We now enumerate the number of win-loss sequences that are exactly $R$ rounds.

\begin{theorem}\label{thm:catalantriangle}
The number of win-loss sequences corresponding to an $m$-card $R$-round game is the entry $C(\frac{m+R}{2} - 1, \frac{R-m}{2})$ of the Catalan triangle.
\end{theorem}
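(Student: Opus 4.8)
The plan is to set up a bijection between the win-loss sequences in question and the ballot-type strings counted by the Catalan triangle. First I would record the constraints already derived in the text: a win-loss sequence describes an $m$-card $R$-round game exactly when it has $w = \frac{R-m}{2}$ letters $W$ and $\ell = \frac{R+m}{2}$ letters $L$, and the running card count $h_i = m + w_i - \ell_i$ satisfies $h_i \geq 1$ for $1 \leq i < R$ together with $h_R = 0$. I would reinterpret such a sequence as a lattice path $(h_0, h_1, \dots, h_R)$ starting at height $h_0 = m$ and taking an up-step for each $W$ and a down-step for each $L$; the game conditions say the path stays strictly positive until time $R$, where it first reaches $0$. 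Since a single step changes the height by $1$, reaching $0$ from a height that is at least $1$ forces $h_{R-1} = 1$, so the final letter is always an $L$.

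The key step is to remove the starting offset $m$, which is the one feature that does not directly match the floor-type condition ``\#U $\geq$ \#D'' defining the Catalan triangle. To do this I would reverse time: set $g_j = h_{R-j}$. The reversed path runs from $g_0 = 0$ to $g_R = m$, stays $\geq 1$ for $1 \leq j \leq R$ (using $h_i \geq 1$ for $i < R$ and $g_R = m \geq 1$), and its steps are up for each $L$ and down for each $W$, so it has $\ell$ up-steps and $w$ down-steps. Because $g_0 = 0$ and $g_1 \geq 1$, the first reversed step is forced to be up (an $L$). Deleting this first up-step and shifting the whole path down by $1$ produces a path from $0$ to $m-1$ of length $R-1$ that never goes below $0$, with $\ell - 1$ up-steps and $w$ down-steps. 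Staying $\geq 0$ is precisely the statement that every prefix has at least as many up-steps as down-steps, so reading up as $U$ and down as $D$ gives a string counted by $C(\ell - 1, w) = C\!\left(\tfrac{m+R}{2} - 1,\ \tfrac{R-m}{2}\right)$.

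Finally I would verify bijectivity by inverting each operation: a Catalan-triangle string with $\ell - 1$ letters $U$ and $w$ letters $D$ is read as a nonnegative lattice path from $0$ to $m-1$; shifting it up by $1$ and prepending an initial up-step from height $0$ yields a path $g$ with $g_0 = 0$, $g_R = m$, and $g_j \geq 1$ for $j \geq 1$; reversing time and recording an $L$ for each up-step of $g$ and a $W$ for each down-step recovers a win-loss sequence satisfying $h_i \geq 1$ for $1 \leq i < R$ and $h_R = 0$. Each operation is invertible, so the two maps are mutually inverse. The main obstacle is the bookkeeping around the starting offset: the positivity constraint on the original sequence is a \emph{ceiling} on how far the $L$'s may lead the $W$'s (namely $\ell_i - w_i \leq m - 1$), whereas the Catalan triangle condition is a \emph{floor}, and the time-reversal is exactly the device that converts one into the other. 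Once the direction of the inequality is handled, checking that the endpoint heights, the step counts, and the forcing of the first and last letters all line up is routine.
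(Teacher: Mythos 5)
Your proof is correct and takes essentially the same route as the paper: both arguments strip off the forced final $L$ and reverse the sequence, converting the terminal-segment positivity condition of the game into the prefix (ballot) condition defining the Catalan triangle, yielding the count $C(\ell-1, w) = C\bigl(\tfrac{m+R}{2}-1, \tfrac{R-m}{2}\bigr)$. Your lattice-path formulation and explicit two-way inverse simply spell out in more detail the string-reversal bijection the paper states tersely.
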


\begin{proof}
Every win-loss sequence ends with an $L$ because Alice must go from having one card to having no cards. Ignore this final $L$. Then the condition on the remaining win-loss sequence is that no terminal segment of the string can have more $W$s than $L$s, because that would mean Alice had zero cards at some point before the end of the game. This is equivalent to the condition on strings that Catalan triangle numbers count, with the roles of terminal and initial segments reversed. Since we have ignored the final $L$, the remaining number of $L$'s and $W$'s is $\ell-1$ and $w$ respectively, and thus there are $C(\ell-1, w) = C(\frac{m+R}{2} - 1, \frac{R-m}{2})$ win-loss sequences corresponding to an $m$-card $R$-round game.
\end{proof}

\begin{corollary}
There are $C_{\frac{R-1}{2}}$ win-loss sequences corresponding to unicard games ending in $R$ rounds.
\end{corollary}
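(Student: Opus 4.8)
The plan is to derive this directly from Theorem~\ref{thm:catalantriangle} by specializing to the unicard case, where Alice starts with a single card, i.e. $m = 1$. First I would recall that the theorem counts the win-loss sequences of an $m$-card $R$-round game as the Catalan triangle entry $C(\frac{m+R}{2}-1, \frac{R-m}{2})$. Setting $m = 1$ and simplifying the two arguments, I expect $\frac{m+R}{2}-1 = \frac{1+R}{2}-1 = \frac{R-1}{2}$ and $\frac{R-m}{2} = \frac{R-1}{2}$, so the count becomes $C(\frac{R-1}{2}, \frac{R-1}{2})$.

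The next step is to invoke the identity relating the diagonal of the Catalan triangle to the ordinary Catalan numbers, namely $C_r = C(r,r)$, which is recorded at the end of Section~\ref{sec:preliminaries}. Applying this with $r = \frac{R-1}{2}$ immediately yields $C(\frac{R-1}{2}, \frac{R-1}{2}) = C_{\frac{R-1}{2}}$, which is the claimed count.

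Since this is a one-line specialization, there is no real obstacle in the argument itself; the only point requiring a brief remark is that $\frac{R-1}{2}$ is a genuine nonnegative integer. This follows from the parity observation already established before the theorem: because $m$ and $R$ share the same parity, and here $m = 1$ is odd, the number of rounds $R$ must also be odd, so $\frac{R-1}{2} \in \mathbb{Z}_{\geq 0}$ and the index of the Catalan number is well defined. I would state this parity check explicitly to keep the corollary self-contained.
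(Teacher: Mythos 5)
Your proof is correct and follows essentially the same route as the paper's: the paper also specializes Theorem~\ref{thm:catalantriangle} to the unicard case (writing it as $\ell - w = 1$, so $C(\ell-1, w) = C(w,w) = C_w$ with $w = \frac{R-1}{2}$, which is just your substitution $m=1$ in different variables) and invokes the diagonal identity $C_r = C(r,r)$. Your added parity remark that $R$ must be odd is a harmless, slightly more careful touch the paper leaves implicit.
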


\begin{proof}
Win-loss sequences corresponding to unicard games have $\ell - w = m = 1$, so substitution gives $C(\ell - 1, w) = C(w, w) = C_{w}$. Unicard games have  $w = \frac{R-1}{2}$, so $C_{w} = C_{\frac{R-1}{2}}$.
\end{proof}

Here, we enumerate the number of win-loss sequences that end within $k$ passthroughs of Alice's hand. 

\begin{theorem}
The number of win-loss sequences corresponding to an $m$-card $k$-passthrough game is $A_k^m$, where $A_k$ is recursively defined by $A_1 = 1$ and $A_{k+1} = 1 + A_k^2$. 
\end{theorem}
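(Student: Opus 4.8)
The plan is to reduce the $m$-card count to the unicard count by a block-decomposition bijection, and then to establish the stated recursion directly in the unicard case. Write $f_m(k)$ for the number of win-loss sequences of an $m$-card $k$-passthrough game, so that $A_k := f_1(k)$ is the unicard count. The theorem then splits into two claims that I would prove in this order: (i) $f_m(k) = A_k^m$, and (ii) $A_1 = 1$ and $A_{k+1} = 1 + A_k^2$.

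For (i) I would use that a single-use $m$-card game decomposes into the $m$ blocks induced by Alice's initial cards, each of which is itself a unicard game. The structural fact I would prove is an invariant on Alice's hand under WL-putback: at the start of every passthrough her hand is the concatenation, in their original left-to-right order, of the current hands of the still-active blocks. Consequently she plays out block $1$, then block $2$, and so on within each passthrough, so the $j$-th passthrough of the whole game is exactly the ordered concatenation of the $j$-th passthroughs of the active blocks; this is consistent with the stylization rule, since the whole game's $j$-th passthrough has length equal to twice its number of wins in passthrough $j-1$, which is the sum over blocks of the corresponding per-block lengths. From this I read off a bijection between $m$-card sequences and $m$-tuples of unicard sequences: forward, interleave the blocks' passthroughs in order; backward, stylize the given sequence and, reading each passthrough left to right, hand each active block the number of letters that its previous passthrough's wins dictate. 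Since the game ends precisely when every block has ended, its number of passthroughs is the maximum over blocks, so it is $k$-passthrough iff every block is. Hence the bijection restricts to $k$-passthrough objects and $f_m(k) = A_k^m$.

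For (ii): $A_1 = 1$, since the only unicard sequence finishing within one passthrough is $L$. For the recursion I classify a unicard $(k+1)$-passthrough sequence by its first letter. If it is $L$, the game is over and the sequence is exactly $L$, giving the summand $1$. If it is $W$, then after this single winning round — the entire first passthrough — Alice holds two cards, and as already noted for the transition $a\,|\,bcd\dots \to ab\,|\,cd\dots$ the remainder is the win-loss sequence of a $2$-card game, which must finish within the remaining $k$ passthroughs. Prepending $W$ is a bijection onto such $2$-card sequences, so by (i) with $m = 2$ there are $f_2(k) = A_k^2$ of them. Summing the two cases gives $A_{k+1} = 1 + A_k^2$, and combining with (i) yields $f_m(k) = A_k^m$.

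The main obstacle is (i), and specifically the hand-order invariant: I must verify that processing a contiguous block from the top while appending its won cards to the very bottom keeps the blocks contiguous and order-preserved, and that the passthrough boundaries of the whole game align with those of every block, so that the ``$k$-passthrough'' condition factors across blocks. Once the invariant and this alignment are secured, both the interleaving bijection and the unicard recursion are routine. I emphasize that (i) with $m=2$ is exactly what powers the recursion in (ii), so the two parts must be proved in this order to avoid circularity.
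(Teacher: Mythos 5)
Your proposal is correct and takes essentially the same route as the paper: the unicard recursion $A_{k+1} = 1 + A_k^2$ via case analysis on the first letter (immediate $L$ versus $W$ followed by the two induced subblocks), combined with the passthrough-wise concatenation of the $m$ blocks' sequences to get $A_k^m$. Your reordering --- proving the interleaving bijection $f_m(k) = A_k^m$ first, via the hand-order invariant, so that the $m=2$ case powers the recursion --- is a sound and slightly more careful organization of the same argument, making explicit the subblock independence that the paper's proof of the $A_k^2$ count uses implicitly.
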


\begin{proof}
We show inductively that the number of $k$-passthrough win-loss sequences is $A_k$ for unicard blocks. Then, combining the $m$ blocks' win-loss sequences gives $A_k^m$ combined win-loss sequences. Combining the sequences occurs by concatenating each of the sequences' first passthroughs, then each of their second passthroughs, and so on.

There is only $A_1 = 1$ unicard block $1$-passthrough win-loss sequence, namely $L$ itself. Then, there are two cases for a $(k+1)$-passthrough win-loss sequence: either an immediate $L$, or a $W$ and a continuation to a subsequent passthrough. In the second case, the win-loss sequences for the subblocks induced by the two cards yielded back to Alice from the $W$ must end within $k$ passthroughs, so there are $A_k^2$ possibilities here. Thus there are $A_{k+1} = 1 + A_k^2$ $k$-passthrough win-loss sequences for unicard blocks, as desired.
\end{proof}

\subsection{Win-loss binary trees}

We describe a bijection between unicard games and full binary trees. This is easily extended to $m$-card games and forests of $m$ full binary trees, where each of the $m$ blocks corresponds to a full binary tree. (Recall that ``full'' means each node has either zero children, in which case it is a leaf, or two children, in which case it is a non-leaf.)

Note that the bijection to binary trees in the following theorem works for any win-loss sequence, not just those corresponding to single-use games. However, we do restrict ourselves to games that Alice loses because such games end with a passthrough of $L$s.

\begin{theorem}
Win-loss sequences corresponding to $k$-passthrough unicard games are in bijection with full binary trees of height $k$.
\end{theorem}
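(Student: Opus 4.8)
The plan is to construct an explicit recursive bijection $\Phi$ that mirrors the recursion $A_{k+1} = 1 + A_k^2$, since that same recursion counts both sides. On the tree side, a full binary tree of height at most $k$ (with the convention that a single node has height $1$, which is what will match the passthrough count) is either a single leaf, or a root whose two subtrees are full binary trees of height at most $k-1$. This is precisely the branching structure ``$L$, or $W$ followed by two subblocks'' that the proof of the $A_k^m$ theorem established for unicard win-loss sequences. So I would define $\Phi(L)$ to be the single-node tree and, for a sequence beginning with $W$, split off the two subblock win-loss sequences $s_1, s_2$ induced by the two cards Alice holds after her first-round win, setting $\Phi$ of the whole sequence to be the tree whose root has left subtree $\Phi(s_1)$ and right subtree $\Phi(s_2)$.

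First I would make precise the deinterleaving map that recovers $(s_1, s_2)$ from a unicard sequence beginning with $W$, and show it is a bijection onto ordered pairs of finite (Alice-losing) unicard win-loss sequences. This is the inverse of the ``combining'' operation from the previous proof, in which the two subblocks' passthroughs are concatenated passthrough-by-passthrough. The content is a bookkeeping claim: processing the combined sequence one passthrough at a time, the split point inside passthrough $j$ is forced, since each subblock's lineage currently holds $1$ plus its number of wins in the previous passthrough, and this count is read off from the already-recovered earlier rounds. Hence $s_1$ and $s_2$ are uniquely reconstructible, and conversely any pair $(s_1, s_2)$ recombines into a unique valid sequence. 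With this established, $\Phi$ is well defined and, by induction on the number of passthroughs, both injective and surjective, because the recursive decompositions on the two sides are in one-to-one correspondence.

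Then I would track the height, whose key identity is that the number of passthroughs of a unicard game equals the height of $\Phi$ of its sequence. The single-passthrough game $L$ maps to the height-$1$ leaf; and if a game begins with $W$, its remaining passthroughs run the two subblocks concurrently, so its passthrough count is $1 + \max(p_1, p_2)$, where $p_1, p_2$ are the subblocks' passthrough counts. This is exactly the rule that the height of a full binary tree is $1$ plus the maximum of its subtrees' heights, so induction gives passthroughs $=$ height. Consequently, games ending \emph{within} $k$ passthroughs correspond precisely to full binary trees of height \emph{at most} $k$, which is the stated claim (reading ``height $k$'' as the height bound $\le k$, consistent with the count $A_k$).

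I expect the main obstacle to be the first step: pinning down the deinterleaving bijection and verifying that the two subblock sequences can always be uniquely extracted from, and recombined into, the combined win-loss sequence. Once the recursive decompositions on both sides are aligned in this way, everything that follows---well-definedness, the bijection property, and the passthrough-equals-height identity---is a routine structural induction.
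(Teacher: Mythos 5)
Your proof is correct, but it takes a genuinely different route from the paper's. The paper constructs the bijection globally rather than recursively: each round of the game becomes a node, the two cards returned by a $W$ become its left and right children in put-back order, and---since the order of Alice's hand is preserved under putback---each passthrough of the win-loss sequence is exactly one level of the tree read left to right; the inverse direction is then just the level-order (top-to-bottom, left-to-right) reading of the tree, with validity of the resulting sequence checked by the prefix condition that leaves never outnumber non-leaves before the end. You instead build the same map recursively, mirroring $A_{k+1} = 1 + A_k^2$: peel off the root ($L$ versus $W$), deinterleave the two subblock sequences, recurse, and establish bijectivity and the passthrough-equals-height identity $1 + \max(p_1, p_2)$ by structural induction. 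Your route makes the enumeration by $A_k$ an immediate corollary and correctly isolates the real content in the deinterleaving lemma, which you rightly flag as the main obstacle; the paper's route avoids needing that lemma explicitly, because the passthroughs-as-levels observation makes the interleaving visible all at once, and it additionally yields the level-order description that the paper reuses afterward (e.g., subtrees encoding subblocks). One bookkeeping correction to your deinterleaving step: the forced split point inside passthrough $j$ is determined because a subblock's lineage holds \emph{twice} the number of $W$'s it had in its previous passthrough (equivalently, cumulatively $1 + w - \ell$ cards), not ``$1$ plus its number of wins in the previous passthrough''; with that fix the split is indeed computable from the already-recovered rounds, your argument goes through, and your refinement that exactly-$p$-passthrough sequences map to trees of height exactly $p$ (so that ``height $k$'' should be read as ``height at most $k$'' to match the within-$k$ definition of $k$-passthrough) is consistent with the paper's construction.
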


\begin{proof}
We describe a construction of the bijection by demonstrating the conversion from win-loss sequences to binary trees and vice versa. 

We begin with a root node for the tree at the top. Each round corresponds to a node in the tree, which we label with $W$ and $L$ for win or loss. Every $L$ yields no cards back to Alice's hand, so we make $L$ nodes terminal leaf nodes. Every $W$ yields two cards back to Alice, and these two cards are thereby associated with this particular $W$. We give every $W$ two children nodes, which we label $W$ or $L$ depending on whether the corresponding card goes on to win or lose its next round. Importantly, the first card put back corresponds to the left child node, and the second card put back corresponds to the right child node. Once Alice undergoes a passthrough, the cards available for the next passthrough are those won during the prior passthrough. Since we wrote in the left-to-right direction and because the order of cards in Alice's stack is preserved after being put back, each passthrough in the win-loss sequence read left-to-right is exactly a level of the tree read left-to-right.

To convert back from a binary tree to a win-loss sequence, we orient the tree with the root at the top and then write $W$ on all non-leaves and $L$ on all leaves. Then we read off the sequence left-to-right, top-to-bottom. Since the number of leaves is always less than or equal to the number of non-leaves until the very end of the tree, the associated win-loss sequence always has at least as many wins as losses until the end, which means that Alice will have a positive number of cards until the end. Therefore, the win-loss sequence is valid, and the bijection is established both ways.
\end{proof}

Since each round corresponds to a node in a win-loss binary tree, we interchangeably refer to rounds and nodes in the win-loss binary tree.

\begin{example}
The tree corresponding to the sequence $W/WW/LWWL/LLLL$ played from initial state $a|bcdefghijkl$ with WL-putback is in Figure~\ref{fig:bintree_ex}. Because the game is single-use, the cards Bob wins are regarded as discarded. In parentheses, we say which card from Alice plays against which card from Bob. Passthrough is abbreviated as PT.
\end{example}

\begin{figure}[ht!]
    \centering
    \scalebox{0.95}{
    \begin{tikzpicture}
        \node (1) at (0, 0) {$W(a/b)$};
        \node (2) at (-3, -1.3) {$W(a/c)$};
        \node (3) at (3, -1.3) {$W(b/d)$};
        \node (4) at (-4, -2.6) {$L(a/e)$};
        \node (5) at (-2, -2.6) {$W(c/f)$};
        \node (6) at (2, -2.6) {$W(b/g)$};
        \node (7) at (4, -2.6) {$L(d/h)$};
        \node (8) at (-3, -3.9) {$L(c/i)$};
        \node (9) at (-1, -3.9) {$L(f/j)$};
        \node (10) at (1, -3.9) {$L(b/k)$};
        \node (11) at (3, -3.9) {$L(g/l)$};
        \node (level1) at (-8, 0)  {Initial: $a|bcdefghijkl$};
        \node (level2) at (-8, -1.3) {After PT1: $ab|cdefghijkl$};
        \node (level3) at (-8, -2.6) {After PT2: $acbd|efghijkl$};
        \node (level4) at (-8, -3.9) {After PT3: $cfbg|ijkl$};
        \draw (9) -- (5) -- (2) -- (1) -- (3) -- (6) -- (10);
        \draw (4) -- (2);
        \draw (8) -- (5);
        \draw (11) -- (6);
        \draw (7) -- (3);
    \end{tikzpicture}
    }
    \caption{The tree for win-loss sequence $W/WW/LWWL/LLLL$}
    \label{fig:bintree_ex}
\end{figure}
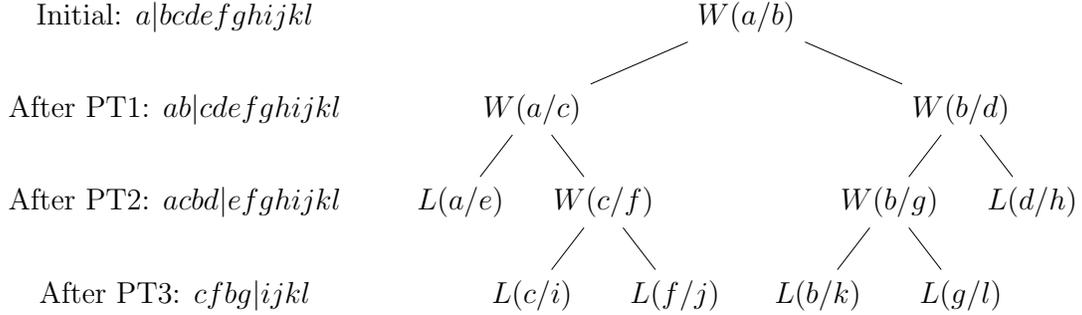

Now, we note that each subtree encodes a subblock. For example, take the subblock induced by card $a$ in the round $a/c$ in the second passthrough. The subblock plays as $a|cefij$ , which follows the win-loss sequence $W/LW/LL$, as shown by the subtree starting from the round $a$ against $c$ in the second level of the tree.

\subsection{Building the game graph from the win-loss tree}

We can create the game graph from a win-loss binary tree labeled with the two cards played in each round: we create a vertex for each card and an edge for each round. We want to show a formal algorithm to convert the win-loss binary tree into the game graph, but we need some definitions first.

Recall an ancestor of a node $x$ is a node on the path between the root and $x$, inclusive. We say that the \textit{right parent} of a node $x$ is the parent of the closest ancestor of $x$ which happens to be a right child. Note, however, that a node whose ancestors are all left children has no right parent. 

Consider a game with WL-putback. At each node of a win-loss binary tree, write the two cards that played in the corresponding round. We have the following lemma that describes the card Bob plays against through the win-loss binary tree.

\begin{lemma}\label{lemma:rightparent}
Consider a single-use $m$-card game played with WL-putback. Consider a round $Q$ where card $y$ plays for Bob against card $x$ for Alice. If $Q$ in a win-loss binary tree has right parent node $P$, then $x$ is the card Bob played in round $P$. If $Q$ does not have a right parent, then $x$ is the card from Alice's initial hand which induced the block that $Q$ is in.
\end{lemma}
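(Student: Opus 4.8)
The plan is to reduce the statement to a single local rule describing how Alice's card changes as we step one level down the tree, and then to trace the root-to-$Q$ path. The local rule is: if node $N$ is the left child of node $M$, then Alice plays the same card in round $N$ as in round $M$; and if $N$ is the right child of $M$, then the card Alice plays in round $N$ is exactly the card Bob played in round $M$. Both halves follow directly from the WL-putback convention together with the tree construction. A non-leaf node is a $W$ round, which Alice wins; under WL-putback she returns her winning card first and the card she just took from Bob second. Since the bijection sends the first card put back to the left child and the second to the right child, the left child is the next round played by Alice's (unchanged) winning card, while the right child is the next round played by the card Bob had just lost.

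Next I would track Alice's card along the path from the root of $Q$'s tree down to $Q$. The root of each tree is, by construction, the first round played by one of Alice's initial cards, namely the card inducing that block, so Alice's card at the root is that initial card. By the local rule, Alice's card is unchanged across every left edge and is reset to the parent's Bob card across every right edge. Hence Alice's card in round $Q$ equals the Bob card of the parent of the \emph{last} right edge traversed on the path, or the block's initial card if no right edge is traversed. This is precisely the dichotomy in the statement, once the terminology is matched up.

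Then I would identify these two nodes with the objects named in the lemma. Let $A$ be the closest ancestor of $Q$ (allowing $A = Q$) that is a right child. If $A$ exists, then by definition its parent is the right parent $P$ of $Q$, and since $A$ is the deepest right-child ancestor, every edge strictly between $A$ and $Q$ is a left edge; applying the local rule along this left chain gives that Alice's card in $Q$ equals Alice's card in $A$, which equals Bob's card in $P$. If no such $A$ exists, then every node on the root-to-$Q$ path is the root or a left child, so the local rule keeps Alice's card equal to the root's card throughout, namely the inducing initial card. I would formalize the ``constant along a left chain'' claim by a short induction on the length of the chain.

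The main obstacle I anticipate is bookkeeping rather than conceptual depth: I must handle the inclusive definition of ancestor carefully (the case $A = Q$, i.e.\ $Q$ itself a right child), confirm that ``closest right-child ancestor'' lines up exactly with ``the parent of the last right edge,'' and make sure the local rule is stated and applied with the correct left/right orientation dictated by WL-putback. Verifying the rule against the worked example in Figure~\ref{fig:bintree_ex} --- for instance, that round $L(b/k)$, whose closest right-child ancestor is $W(b/d)$, indeed has Alice playing the root's Bob card $b$ --- is a useful sanity check before committing to the induction.
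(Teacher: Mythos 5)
Your proof is correct and takes essentially the same approach as the paper: your local rule (a left child preserves Alice's card, a right child resets it to the parent's Bob card) is exactly the paper's observation that under WL-putback a winning card next plays in a left-child round while a card just won from Bob next plays in a right-child round, and both arguments then identify $P$ as the parent of the nearest right-child ancestor of $Q$. Your downward trace along the root-to-$Q$ path with an explicit induction is just a more systematic write-up of the paper's forward trace of the card $x$ from round $P$ to round $Q$.
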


\begin{proof}
After Alice wins a card $x$ from Bob in round $P$, it is put back second and hence plays in the right child round the next time it plays. Each time $x$ wins a subsequent round, it gets put back first due to WL-putback and hence plays in the left child round for the next time it plays. Eventually, $x$ plays $y$ in round $Q$. Working backwards to get from node $Q$ to node $P$ in the win-loss binary tree, we have to find the nearest ancestor of $Q$ which is a right child, corresponding to $x$ being put back second after being won from Bob, and then this nearest ancestor's parent, corresponding to the round $P$ where Bob played $x$.

A special case occurs if Alice never won card $x$ from Bob; i.e., when Alice began the game with $x$. In this case, $x$ is card $a$ from Alice's initial hand, and as long as $x$ continues winning, it gets put back first and next plays in the left child round. In this case, each round $x$ plays in has no right parent. Note that $x$ induces the block in which round $Q$ occurs. 
\end{proof}

Now we show the steps to obtain the game graph for a WL-putback game from the forest of win-loss binary trees labeled with the two cards played in each round.

\begin{enumerate}
    \item Add a node for each of Alice's initial cards above and to the left of the root of the tree they induced (so that the true root node is effectively a right child of this new node).
    \item Change the labels to the card played by Bob.
    \item Replace all edges with edges connecting each node to its right parent.
\end{enumerate}

We show the steps in Figure~\ref{fig:gettingggfromwlbt} for the unicard game starting from initial state $a|bcdef$ and following the win-loss sequence $W/LW/LL$.

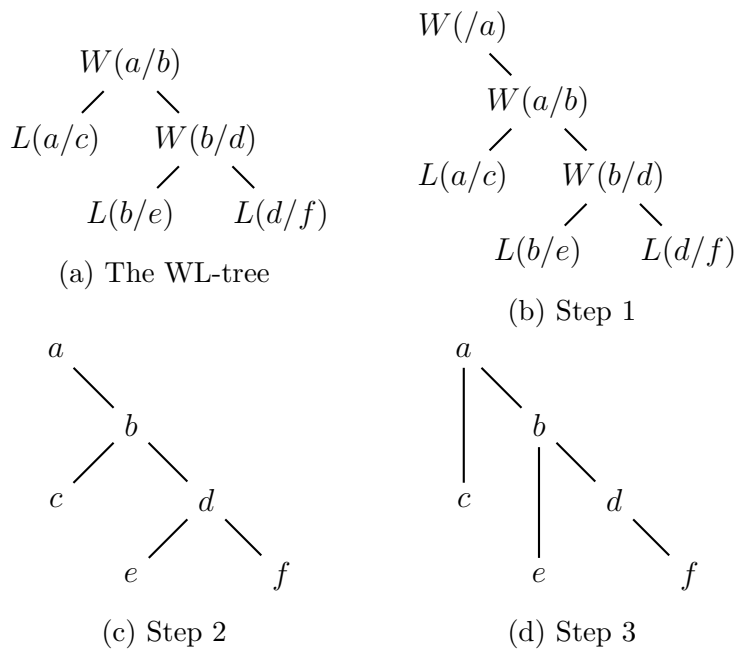
\begin{figure}[ht!]
    \centering
    \begin{subfigure}{0.32\textwidth}
        \centering
        \scalebox{1}{
        \begin{tikzpicture}
            \node (b) at (0, 0) {$W(a/b)$};
            \node (c) at (-1, -1) {$L(a/c)$};
            \node (d) at (1, -1) {$W(b/d)$};
            \node (e) at (0, -2) {$L(b/e)$};
            \node (f) at (2, -2) {$L(d/f)$};
            \draw[thick] (c) -- (b) -- (d) --(e);
            \draw[thick] (d) -- (f);
        \end{tikzpicture}}
        \caption{The WL-tree}
    \end{subfigure}
        \begin{subfigure}{0.32\textwidth}
        \centering
        \scalebox{1}{
        \begin{tikzpicture}
            \node (a) at (-1, 1) {$W(/a)$};
            \node (b) at (0, 0) {$W(a/b)$};
            \node (c) at (-1, -1) {$L(a/c)$};
            \node (d) at (1, -1) {$W(b/d)$};
            \node (e) at (0, -2) {$L(b/e)$};
            \node (f) at (2, -2) {$L(d/f)$};
            \draw[thick] (a) -- (b);
            \draw[thick] (c) -- (b) -- (d) --(e);
            \draw[thick] (d) -- (f);
        \end{tikzpicture}}
        \caption{Step 1}
    \end{subfigure}
    
    \begin{subfigure}{0.32\textwidth}
        \centering
        \scalebox{1}{
        \begin{tikzpicture}
            \node (a) at (-1, 1) {$a$};
            \node (b) at (0, 0) {$b$};
            \node (c) at (-1, -1) {$c$};
            \node (d) at (1, -1) {$d$};
            \node (e) at (0, -2) {$e$};
            \node (f) at (2, -2) {$f$};
            \draw[thick] (a) -- (b) -- (c);
            \draw[thick] (b) -- (d) -- (e);
            \draw[thick] (d) -- (f);
        \end{tikzpicture}}
        \caption{Step 2}
    \end{subfigure} 
    \begin{subfigure}{0.32\textwidth}
        \centering
        \scalebox{1}{
        \begin{tikzpicture}
            \node (a) at (-1, 1) {$a$};
            \node (b) at (0, 0) {$b$};
            \node (c) at (-1, -1) {$c$};
            \node (d) at (1, -1) {$d$};
            \node (e) at (0, -2) {$e$};
            \node (f) at (2, -2) {$f$};
            \draw[thick] (c) -- (a) -- (b);
            \draw[thick] (e) -- (b) -- (d) -- (f);
        \end{tikzpicture}}
        \caption{Step 3}
    \end{subfigure}
    \caption{Getting the game graph from win-loss binary tree with WL-putback} 
    \label{fig:gettingggfromwlbt}
\end{figure}

\begin{theorem}
The algorithm described above converts a forest of labeled win-loss binary trees into the game graph.
\end{theorem}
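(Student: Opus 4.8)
The plan is to show that the algorithm produces exactly one vertex per card and exactly one edge per round joining Alice's card to Bob's card in that round, since that is precisely the definition of the game graph. (The isolated vertices of the game graph are Bob's unplayed cards, which lie in no tree of the forest and are simply appended; I would handle them in a sentence and focus on the nontrivial components.) The engine of the argument is Lemma~\ref{lemma:rightparent}, which already identifies Alice's card in each round in terms of the right-parent structure of the win-loss binary tree.

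First I would check the vertices and their labels. In a single-use game Bob plays each of his cards at most once, so each played Bob card appears as Alice's opponent in exactly one round; hence after Step~2 the original tree nodes carry distinct labels, one per played Bob card. Step~1 adjoins one new node per block, labeled by Alice's initial card inducing that block. Thus Steps~1--2 yield exactly one vertex for each non-isolated card, namely the played Bob cards together with Alice's initial cards.

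Next I would analyze the edges created in Step~3. Fix a round $Q$, whose node is labeled (after Step~2) by Bob's card $y$; let $x$ be Alice's card in round $Q$. The claim is that the right parent of $Q$ in the tree \emph{after} Step~1 is always the node labeled $x$, which splits into the two cases of Lemma~\ref{lemma:rightparent}. If $Q$ has a right parent $P$ in the original tree, the lemma gives that $x$ is the card Bob played in $P$, i.e. the Step~2 label of $P$; moreover Step~1 leaves this right parent unchanged, since the closest right-child ancestor of $Q$ is a proper descendant of the root and is unaffected by adjoining a node above the root. So the edge $Q$--$P$ joins $y$ and $x$. If $Q$ has no right parent in the original tree, then $Q$ and all its ancestors up to the root are left children, and the lemma gives that $x$ is the initial card inducing $Q$'s block. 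Here I would invoke the precise effect of Step~1: adjoining the new node so that the original root becomes its right child makes that root the (unique, hence closest) right-child ancestor of $Q$, whose parent is the new node labeled $x$. In either case Step~3 draws the edge $\{x,y\}$, which is exactly the game-graph edge for round $Q$. Conversely, the new nodes from Step~1 have no right parent and contribute no edge, so Step~3 produces exactly one edge per round and nothing more.

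The main obstacle I anticipate is the bookkeeping around the right-parent relation once the tree is altered by Step~1: one must confirm the clean structural statement that reparenting the root as a right child converts every node that \emph{previously} had no right parent into a node whose right parent is the new initial-card node, while leaving the right parent of every other node unchanged. Once this point is secured, Lemma~\ref{lemma:rightparent} supplies the identification of $x$ in both cases, and matching the Step~2 labels to the endpoints of the drawn edges is routine. It would be worth checking the running example of Figure~\ref{fig:gettingggfromwlbt} against this analysis to confirm the edge set $\{a,b\},\{a,c\},\{b,d\},\{b,e\},\{d,f\}$ emerges as expected.
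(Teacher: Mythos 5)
Your proposal is correct and follows essentially the same route as the paper: distinctness of Bob's played cards (from the single-use property) handles the vertices, and Lemma~\ref{lemma:rightparent} identifies each Step~3 edge as a round's pair of cards. Your careful verification that Step~1 gives every node lacking a right parent the new initial-card node as its right parent, while leaving all other right parents unchanged, is exactly the bookkeeping the paper delegates to its informal ``null card'' discussion following the algorithm, so you have in fact made the paper's terse proof more rigorous rather than diverged from it.
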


\begin{proof}
All second cards in the labeling of a win-loss tree, i.e.\ cards Bob plays in the game, are distinct due to the game being single-use. This is why step two gives us nodes with all possible cards in the game graph. Edges created in Step 3 describe pairs of cards that play against each other due to Lemma~\ref{lemma:rightparent}.
\end{proof}

We now motivate Step 1 of the algorithm using a unicard game as an example. Why does this trick of putting $a$ above and to the left of $b$ work? Let us say Bob somehow started with all the cards in the state $|abcd\dots$. Suppose Alice has a ``null'' card that beats $a$ and wins it over from Bob. Due to WL-putback, she puts $a$ second, which means it will next play in the round corresponding to the right child of this null versus $a$ round. From here on, the game operates normally from the state $a|bcd\dots$. In fact, we can treat such a null round as a parent of the true root of the win-loss binary tree.

\section{Counting \texorpdfstring{$R$}{R}-round and \texorpdfstring{$k$}{k}-passthrough Unicard Single-use Games}\label{sec:countinggames}

For a randomly initialized $m$-card game played with
\begin{enumerate}[label=(\alph*)]
    \item random putback
    \item or WL-putback,
\end{enumerate}
we can ask the probability the game ends up being
\begin{enumerate}
    \item $R$-round
    \item or $k$-passthrough.
\end{enumerate}

We only consider single-use games here. This means $n$, the number of cards in the deck, must be sufficiently large so that the number of cards Bob starts with, namely $n-m$, is at least the number of rounds.

At most how many rounds may take place in a $k$-passthrough game? During each of the first $k-1$ passthroughs, Alice can at most double her number of cards. By the end of the $k$th passthrough, she must have lost all her cards. So summing $m(1 + 2 + \dots + 2^{k-1})$ gives at most $m(2^k -1)$ total rounds.

\subsection{Counting \texorpdfstring{$R$}{R}-round single-use states} \label{sec:singleuse_exactlyrrounds}

In the following lemma we consider $m$-card states, where we assume that the total number of cards is $n$ and $0 < m < n$. Recall that the number of $m$-card states is $n!$, and likewise for $(m-1)$-card and $(m+1)$-card states.

\begin{lemma}\label{lem:uniform_probability}
Consider the uniform probability distribution over the $m$-card states. After one round is played from each of these states, half have Alice winning the first round and half have Bob winning the first round. Taking the half of cases where Alice wins, the probability distribution of the resulting states after random putback is the uniform distribution over all $(m+1)$-card states, and similarly taking the half of cases where Bob wins, the probability distribution of the resulting states is the uniform distribution over all $(m-1)$-card states. 
\end{lemma}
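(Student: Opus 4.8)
The plan is to prove all three assertions by exhibiting explicit bijections between sets of states, after which the probability statements are immediate by counting. I would fix notation by writing an $m$-card state as $a_1 \cdots a_m \mid a_{m+1} \cdots a_n$, and observe that only Alice's top card $a_1$ and Bob's top card $a_{m+1}$ matter for the first round. For the half-half split, I would use the involution that exchanges the contents of positions $1$ and $m+1$ (swapping the two top cards). Since all card values are distinct there are no ties, so every $m$-card state has either $a_1 > a_{m+1}$ (Alice wins) or $a_1 < a_{m+1}$ (Bob wins); the swap is a fixed-point-free involution pairing each Alice-win state with a Bob-win state, so exactly $n!/2$ states lie in each class.

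For the Alice-win case I would analyze the map that sends a pair (Alice-win $m$-card state, putback choice) to the resulting $(m+1)$-card state. Writing that state as $b_1 \cdots b_{m+1} \mid b_{m+2} \cdots b_n$, I would record that $b_1 \cdots b_{m-1} = a_2 \cdots a_m$, that $b_{m+2} \cdots b_n = a_{m+2} \cdots a_n$, and that $\{b_m, b_{m+1}\} = \{a_1, a_{m+1}\}$ with the order set by the putback choice. The crux is that this map is a bijection: from any $(m+1)$-card state the original winning card is forced to be $\max(b_m, b_{m+1})$ and the loser $\min(b_m, b_{m+1})$ (because Alice won, so $a_1 > a_{m+1}$), which recovers the original state uniquely, while the order of $b_m, b_{m+1}$ recovers the putback choice. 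There are $n!/2$ Alice-win states, each with two putback choices, so the domain has size $n!$, matching the number of $(m+1)$-card states; with injectivity this yields a bijection. Conditioned on Alice winning, each Alice-win state carries probability $2/n!$ and each putback choice probability $1/2$, so each (state, choice) pair has probability $1/n!$; transporting along the bijection assigns probability $1/n!$ to every $(m+1)$-card state, i.e.\ the uniform distribution.

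The Bob-win case is entirely analogous. Here the two played cards land at the bottom of Bob's hand, so the resulting $(m-1)$-card state $b_1 \cdots b_{m-1} \mid b_m \cdots b_n$ satisfies $\{b_{n-1}, b_n\} = \{a_1, a_{m+1}\}$, and the same max/min recovery inverts the map, except that now $a_{m+1} = \max(b_{n-1}, b_n)$ since Bob won. The main obstacle I anticipate is purely the bookkeeping of positions together with one conceptual point: verifying that random putback destroys no information, that is, that the win condition $a_1 > a_{m+1}$ exactly compensates for the lost knowledge of which of the two bottom cards was Alice's. Once the observation that ``$\max$ recovers the winner'' is in place, injectivity and hence the bijection are routine, and uniformity follows at once from the equal cardinalities of domain and codomain.
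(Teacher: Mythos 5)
Your proposal is correct and matches the paper's argument in essence: the paper also inverts the round by removing the bottom two cards of the winner's hand and using the larger-card/smaller-card distinction to recover the predecessor state uniquely, which is exactly your $\max/\min$ recovery, and the uniformity conclusion follows by the same counting. Your write-up is merely more explicit than the paper's --- notably the fixed-point-free swap involution for the half-half split and the bookkeeping of (state, putback choice) pairs, both of which the paper leaves implicit.
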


\begin{proof}
    Each $(m+1)$-card state is preceded by exactly one $m$-card state, which is the state obtained by reversing the process of a round that Alice won. This is done by removing the bottom two cards from Alice and putting the larger at the top of her hand and the smaller at the top of Bob's hand. Therefore, each $(m+1)$-card state is attainable from the $n!/2$ $m$-card states with a uniform probability distribution over the states. Likewise, we get the result for $(m-1)$-card states.
\end{proof}

This lemma allows us to prove the following theorem.

\begin{theorem}
The probability a random putback single-use game with randomly chosen $m$-card initial state is $R$-round, where $n \geq m+R$, is 
\begin{equation*}
    \frac{C(\frac{R+m}{2}-1, \frac{R-m}{2})}{2^R}.
\end{equation*}
\end{theorem}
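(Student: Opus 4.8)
The plan is to partition the event that the game is $R$-round according to which win-loss sequence the game realizes, to show that each individual win-loss sequence has probability exactly $2^{-R}$, and then to multiply by the count of valid win-loss sequences supplied by Theorem~\ref{thm:catalantriangle}.

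First I would record why the single-use hypothesis is automatic under the stated bound. Since Bob begins with $n-m$ cards, his first passthrough spans rounds $1$ through $n-m$, and the hypothesis $n \ge m+R$ is exactly $R \le n-m$. Hence a game that Alice loses in exactly $R$ rounds necessarily terminates within Bob's first passthrough and is single-use. Consequently, over uniformly random $m$-card initial states, the event ``single-use and $R$-round'' coincides with the event ``Alice loses in exactly $R$ rounds,'' and the latter is the disjoint union, over all valid length-$R$ win-loss sequences $s$, of the events ``the game realizes $s$.'' By Theorem~\ref{thm:catalantriangle} there are $C(\frac{R+m}{2}-1, \frac{R-m}{2})$ such sequences.

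The heart of the argument is to show that for each fixed valid win-loss sequence $s = x_1 \cdots x_R$ the probability, over both the random initial state and the random putback choices, that the game realizes exactly $s$ equals $2^{-R}$. I would prove this by induction on the round using Lemma~\ref{lem:uniform_probability} as the engine. Writing $m_i = m + w_i - \ell_i$ for Alice's card count after $i$ rounds (with $m_0 = m$), the inductive claim is that, conditioned on the prefix $x_1 \cdots x_i$ being realized, the state is uniformly distributed over the $m_i$-card states and this prefix has probability $2^{-i}$. The inductive step applies Lemma~\ref{lem:uniform_probability} to the current uniform distribution: the outcome $x_{i+1} \in \{W, L\}$ occurs with probability exactly $1/2$, and conditioning on it leaves the state uniform over the $m_{i+1}$-card states. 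Multiplying the $R$ factors of $1/2$ yields $2^{-R}$.

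The step requiring care is verifying that the hypothesis $0 < m_i < n$ of Lemma~\ref{lem:uniform_probability} holds at every round, so that the lemma may legitimately be invoked, and that uniformity genuinely propagates through the conditioning rather than merely through a marginal count. Since $s$ is valid we have $m_i > 0$ for $i < R$, and $m_i \le m + w_i \le \frac{R+m}{2} \le R \le n-m < n$, so each intermediate count lies strictly between $0$ and $n$; in particular the final round sends a $1$-card state to a $0$-card state through the Bob-wins branch of the lemma. Summing the per-sequence probability $2^{-R}$ over the $C(\frac{R+m}{2}-1, \frac{R-m}{2})$ valid sequences then gives the claimed value $C(\frac{R+m}{2}-1, \frac{R-m}{2})/2^R$. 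I expect the main obstacle to be precisely this bookkeeping, namely confirming that the conditional distribution remains exactly uniform at each step and that the lemma's range hypothesis is never violated; once that is secured, the result is a clean count times a clean geometric factor.
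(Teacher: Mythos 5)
Your proposal is correct and follows essentially the same route as the paper's proof: propagate uniformity round by round via Lemma~\ref{lem:uniform_probability}, conclude that each fixed win-loss sequence occurs with probability $2^{-R}$, and multiply by the sequence count from Theorem~\ref{thm:catalantriangle}. Your extra bookkeeping---verifying $0 < m_i < n$ at every round and that $n \ge m + R$ forces the game to be single-use---tightens details the paper leaves implicit but does not change the argument.
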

\begin{proof}
The number of wins $w$ and the number of losses $\ell$ for an $m$-card $R$-round game must satisfy $\ell - w = m$ and $\ell + w = R$, so we have $\ell = \frac{R+m}{2}$ and $w = \frac{R-m}{2}$.

By Lemma~\ref{lem:uniform_probability}, we have that the probability distribution of the state after any number of rounds is the uniform distribution over states with the appropriate number of cards. Due to uniformity, the probability the card at the top of Alice's hand is greater than the card at the top of Bob's hand is $\frac{1}{2}$ at every point in the game. Thus, the chance Alice wins each round is $\frac{1}{2}$. By Theorem~\ref{thm:catalantriangle}, there are $C(\ell-1, w)$ win-loss sequences with $\ell$ and $w$ rounds of $L$ and $W$ respectively. The game could have followed any one of these with probability $1/2^R$ each, so the total probability of a game being $R$-round is $\frac{C(\ell-1, w)}{2^R}$, as claimed.
\end{proof}

For WL-putback, we do not have the same randomness after each round: states where Alice puts the lesser card before the greater card do not occur. For example, after Alice wins the first round of $3|124$, WL-putback makes the next state necessarily $31|24$ instead of $31|24$ or $13|24$ with equal probability. However, it turns out that we still have the same probability of a game with randomly chosen $m$-card initial state being $R$-round.

First, we prove two lemmas.

Note that for a unicard game, the system of equations are $\ell + w = R$ and $\ell - w = 1$. In particular, for the following lemma, we use $R = 2w + 1$.
\begin{lemma} \label{lem:unicard_wlputback_rround}
The probability that a randomly chosen single-use unicard state $a_1 | a_2 a_3 \dots a_n$ played with WL-putback is $R$-round is 
\begin{equation*}
    \frac{C_\frac{R-1}{2}}{2^R}.
\end{equation*}
\end{lemma}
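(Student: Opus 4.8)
The plan is to set up a self-convolution recursion for the probability and recognize it as the recursion defining $C_{(R-1)/2}/2^R$. Write $p_R$ for the probability in the statement. The first simplification I would make is to observe that in a single-use game Bob never recycles a card during the relevant play: during his first passthrough he plays his initial cards $a_2, a_3, \dots$ strictly in order, each at most once, and the game ends before he reaches any card he has won. Hence an $R$-round unicard game involves only the cards $a_1, \dots, a_{R+1}$, and whether the game is $R$-round is a function only of their relative order. Consequently I may replace the finite deck by i.i.d.\ $\mathrm{Uniform}[0,1]$ values without changing $p_R$, since a uniform random permutation induces the uniform distribution on relative orders. The base case is immediate: $p_1 = \Pr[a_1 < a_2] = \tfrac12 = C_0/2^1$.

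For the inductive step I would use the subblock decomposition from Section~\ref{sec:blocks_and_wlbt}. If Alice loses round $1$ the game is over ($R=1$); otherwise Alice wins, and under WL-putback the state becomes $a_1 a_2 \mid a_3 a_4 \dots$, after which $a_1$ and $a_2$ induce two subblocks that never again share a card. I would argue that, conditioned on the values $a_1 = u_1 > a_2 = u_2$, these subblocks are independent and each is distributed exactly as a standalone unicard game whose Alice-card has the prescribed value: each subblock consumes a disjoint run of the still-unseen Bob cards in order, and the number consumed at any moment is dictated by past outcomes rather than by the values about to be revealed, so each subblock sees a fresh i.i.d.\ uniform stream and evolves under WL-putback exactly as an isolated game. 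Writing $g(u,r)$ for the probability that a standalone unicard game with Alice-card value $u$ lasts exactly $r$ rounds (so $p_r = \int_0^1 g(u,r)\,du$), this yields
\[
  p_R \;=\; \sum_{R_1+R_2 = R-1} \int_0^1\!\!\int_0^1 \mathbf{1}[u_1 > u_2]\, g(u_1,R_1)\, g(u_2,R_2)\, du_1\, du_2 ,
\]
the sum ranging over ordered pairs of odd $R_1,R_2\ge 1$.

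The decisive manipulation is a symmetrization: pairing the ordered term $(R_1,R_2)$ with $(R_2,R_1)$ and swapping the names of $u_1,u_2$ in one of them turns the two half-domain indicators $\mathbf{1}[u_1>u_2]$ and $\mathbf{1}[u_1<u_2]$ into the full square, so that the genuine $u$-dependence (note $g(u,1)=1-u$ is not constant) cancels and each contribution collapses to $p_{R_1}p_{R_2}$. The net effect is exactly
\[
  p_R \;=\; \tfrac12 \sum_{R_1+R_2 = R-1} p_{R_1}\, p_{R_2},
\]
with the factor $\tfrac12$ supplied precisely by the first-round condition $a_1>a_2$. Substituting $R=2w+1$ and $R_i = 2w_i+1$ turns this, after clearing powers of two, into the Catalan recursion $C_w = \sum_{w_1+w_2=w-1} C_{w_1} C_{w_2}$; together with $p_1 = 1/2$ this forces $p_R = C_{(R-1)/2}/2^R$. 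Equivalently, one simply verifies that $C_{(R-1)/2}/2^R$ satisfies the same recursion and base case.

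I expect the main obstacle to be the independence claim in the inductive step: one must check carefully that the dynamic, outcome-dependent way in which Bob's cards are split between the two subblocks introduces no selection bias, so that each subblock genuinely sees an independent fresh uniform stream and reproduces the standalone law $g(u_i,\cdot)$. This is a stopping-time/optional-skipping argument on the i.i.d.\ stream and deserves a careful proof. The reduction to relative orders and the symmetrization are then routine. It is worth emphasizing that the proof does \emph{not} proceed by showing each individual win-loss sequence has probability $2^{-R}$: under WL-putback this is false in general (for $R=5$ the two sequences have probabilities $\tfrac{1}{48}$ and $\tfrac{1}{24}$, summing to $\tfrac{1}{16}$), and only the aggregate over all sequences matches the random-putback answer.
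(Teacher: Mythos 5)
Your proposal is correct and is essentially the paper's own proof: the paper likewise proceeds by strong induction on $w$, conditions on Alice winning the first round (the factor $\tfrac12$), decomposes the remainder into the two subblocks induced by $a_1$ and $a_2$, and neutralizes the conditioning $a_1 > a_2$ by exactly your symmetrization---there phrased as temporarily pretending the first putback was random, so that $\tfrac{A_x + A_{w-x}}{2} = \tfrac{C_x C_{w-x}}{2^{2w+2}}$---before closing with the Catalan convolution. Your i.i.d.-uniform kernel $g(u,r)$ formulation, the flagged optional-skipping justification of subblock independence, and the (correct) remark that individual win-loss sequences do not each have probability $2^{-R}$ under WL-putback are just more explicit renderings of the same argument.
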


\begin{proof}
We use strong induction on $w$. 

The base case $w = 0$ gives $\frac{C_w}{2^{2w+1}} = \frac{1}{2}$, which is indeed the probability that the game ends in exactly $2 \cdot 0+1=1$ round: this happens when $a_2$ beats $a_1$ in the first round itself.

For the inductive hypothesis, assume that the probability a randomly chosen single-use game is $(2x+1)$-round is $\frac{C_x}{2^{2x+1}}$ for every $x$ from $0$ to $r$. In order for the game to last exactly $2(w+1)+1 = (2w+3)$ rounds, Alice must first win the first round. So we must have $a_1 > a_2$, and then the state becomes $a_1 a_2 | a_3 \dots a_n$ after putting the cards back in the WL order. The $\frac{1}{2}$ chance that $a_1 > a_2$ is used again later.

However, temporarily assume that Alice had done random putback for this round before proceeding with WL-putback for the remainder of the game. Then the state is described as $b_1 b_2 | a_3 \dots a_n$, where $b_1$ and $b_2$ are $a_1$ and $a_2$ in some order. One round has taken place, so for the remainder of the game to last exactly in $2w+2$ total rounds, the subblocks induced by the next rounds of $b_1$ and $b_2$ must end in $2x+1$ and $2y+1$ rounds exactly, where $x$ and $y$ are numbers such that $(2x+1)+(2y+1) = 2w+2$ rounds.

Note that the state $a_1 a_2 | a_3 \dots a_n$ does not follow the uniform distribution over all $2$-card states, because we know $a_1 > a_2$, but the state $b_1 b_2 | a_3 \dots a_n$ does follow the uniform distribution over all $2$-card states, because we can have either $b_1 > b_2$ or $b_1 < b_2$. Therefore, the subblocks induced by $b_1$ and $b_2$ are totally independent. By the inductive hypothesis, the probability for these subblocks to end in $2x+1$ and $2y+1$ rounds is \[\frac{C_x}{2^{2x+1}} \frac{C_y}{2^{2y+1}} = \frac{C_x C_y}{2^{2w+2}}.\]
Let $A_x$ denote the probability that $a_1$'s subblock ends in $2x+1$ rounds and $a_2$'s subblock ends in $2w+2-(2x+1) = 2(w-x)+1$ rounds from the state $a_1 a_2 | a_3 \dots a_n$. Then $\frac{A_x + A_{w-x}}{2}$ is the average of two cases of
\begin{enumerate}
    \item $a_1$'s subblock ending in $2x+1$ rounds and $a_2$'s subblock ending in $2(w-x)+1$ rounds, and
    \item $a_1$'s subblock ending in $2(w-x)+1$ rounds and $a_2$'s subblock ending in $2x+1$ rounds.
\end{enumerate}
Importantly, this averaging of cases is exactly what we did with using $b_1$ and $b_2$ as $a_1$ and $a_2$ in either order. Therefore, we have
\[\frac{A_x + A_{w-x}}{2} = \frac{C_x C_{w-x}}{2^{2r+2}}.\]

The total probability of the game ending in $2w+3$ rounds is then
\begin{equation*}
    \frac{1}{2} \sum_{x=0}^{x=w} A_x,
\end{equation*}
where the first $\frac{1}{2}$ comes from the probability that $a_1$ beat $a_2$ initially. We have
\begin{align*}
    &\frac{1}{2} \sum_{x=0}^{x=w} A_x \\ 
    = &\frac{1}{2} \left( \frac{A_0 + A_w}{2} + \frac{A_1 + A_{w-1}}{2} + \dots + \frac{A_w + A_0}{2} \right)  \\
    = & \frac{1}{2} \sum_{x=0}^{x=w} \frac{C_x C_{w-x}}{2^{2w+2}}. \\
\end{align*}
Finishing with the Catalan recursive formula, we get
\begin{equation*}
\frac{1}{2}\sum_{x=0}^{x=w} \frac{C_x C_{w-x}}{2^{2w+2}} 
    = \frac{1}{2^{2w+3}} C_{w+1} = \frac{1}{2^{R+1}} C_{w+1}, 
\end{equation*}
and the induction is complete. 
\end{proof}

We need the following lemma about the properties of the Catalan triangle. 
\begin{lemma}\label{lem:sumprodcatalan}
We have
\begin{equation*}
    C(m+k-1, k) = \sum_{\substack{x_1 + x_2 + \dots + x_m = k \\ x_1, \dots, x_m \geq 0}} \prod_{i=1}^{i=m} C_{x_i}.
\end{equation*}
\end{lemma}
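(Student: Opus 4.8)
The plan is to recognize the right-hand side as a coefficient in a power of the Catalan generating function and then match it against the closed form for the Catalan triangle recalled in Section~\ref{sec:preliminaries}. Let $C(x) = \sum_{r \geq 0} C_r x^r$, which satisfies $C(x) = 1 + x\,C(x)^2$ by the standard Catalan recursion. Since the sum ranges over all ways of writing $k = x_1 + \dots + x_m$ with $x_i \geq 0$ and multiplies the corresponding $C_{x_i}$, the right-hand side is exactly the coefficient of $x^k$ in $C(x)^m$. Thus it suffices to show that $[x^k]\,C(x)^m = C(m+k-1, k)$.

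First I would dispose of the trivial case $k = 0$, where both sides equal $1$. For $k \geq 1$, I would extract the coefficient by Lagrange inversion. Setting $B = C - 1$, the functional equation becomes $B = x(1+B)^2$, so $B = x\,\phi(B)$ with $\phi(u) = (1+u)^2$ and $B(0) = 0$. Applying Lagrange inversion to $H(u) = (1+u)^m$, and using $H(B) = (1+B)^m = C^m$, gives
\[ [x^k]\,C(x)^m = \frac{1}{k}\,[u^{k-1}]\,H'(u)\,\phi(u)^k = \frac{m}{k}\,[u^{k-1}](1+u)^{m+2k-1} = \frac{m}{k}\binom{m+2k-1}{k-1}. \]

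Finally I would simplify the left-hand side using the formula $C(n,k) = \frac{n+1-k}{n+1}\binom{n+k}{k}$ from the preliminaries: with $n = m+k-1$ this reads $C(m+k-1,k) = \frac{m}{m+k}\binom{m+2k-1}{k}$. A short factorial computation shows that both this expression and $\frac{m}{k}\binom{m+2k-1}{k-1}$ equal $\frac{m\,(m+2k-1)!}{k!\,(m+k)!}$, which completes the proof. I expect the main obstacle to be the coefficient extraction: Lagrange inversion is the cleanest route, but one must set it up correctly (inverting $B = x(1+B)^2$ rather than $C$ itself) and then carry out the factorial bookkeeping without slips. As an alternative that avoids generating functions entirely, the identity also follows directly from results already established: by Theorem~\ref{thm:catalantriangle} the left-hand side counts win-loss sequences of an $m$-card game with $k$ wins; such a game splits into $m$ ordered unicard blocks whose win-loss sequences combine bijectively, and a block with $x_i$ wins contributes $C_{x_i}$ sequences by the corollary on unicard games, with $\sum_i x_i = k$, which reproduces the right-hand side.
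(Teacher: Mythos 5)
Your proposal is correct, but it proves the lemma by a genuinely different route than the paper. The paper's proof is a direct bijection on lattice paths: it interprets $C(p,q)$ as counting up/right paths staying weakly below a diagonal, and splits a path to $(m+k,k)$ at the \emph{last} time it touches each of the lines $x-y=i$ for $i=1,\dots,m-1$, producing an ordered sequence of $m$ Catalan-type subpaths --- a last-touch decomposition that is exactly the combinatorial shadow of the identity $[x^k]\,C(x)^m = C(m+k-1,k)$ you prove algebraically. Your main route instead extracts the coefficient by Lagrange inversion, and the setup is sound: the convolution interpretation of the right-hand side as $[x^k]\,C(x)^m$ is correct, inverting $B=C-1$ (so that $B=x(1+B)^2$ with $\phi(u)=(1+u)^2$ and $H(u)=(1+u)^m$) is the right move since $C(0)\neq 0$, and the computation $\frac{m}{k}\binom{m+2k-1}{k-1}=\frac{m}{m+k}\binom{m+2k-1}{k}=\frac{m\,(m+2k-1)!}{k!\,(m+k)!}$ checks out, as does the separate $k=0$ case. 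What each approach buys: yours is shorter to verify, needs no bijection bookkeeping, and generalizes immediately (e.g.\ to Fuss--Catalan analogues by changing $\phi$), while the paper's stays elementary and self-contained, requiring nothing beyond its own path conventions. Your sketched alternative --- deducing the identity from Theorem~\ref{thm:catalantriangle} plus the block decomposition of an $m$-card game into $m$ unicard games --- is non-circular (both ingredients precede Lemma~\ref{lem:sumprodcatalan} in the paper) and is in fact closest in spirit to the paper's argument, since the passthrough-wise interleaving of block sequences is the game-theoretic counterpart of the path-splitting bijection; the one point you would need to firm up there is that this interleaving is genuinely bijective, a fact the paper itself only asserts in passing in the proof of the $A_k^m$ theorem.
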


\begin{proof}
By definition $C(p, q)$ is the number of paths from $(0, 0)$ to $(p, q)$ consisting of up ($U$) and right $(R)$ moves that never go above the line $x-y=0$. 

We describe a bijection between
\begin{itemize}
    \item Type $A$: paths from $(0, 0)$ to $(m+k, k)$ consisting of $m+k$ copies of $R$ and $k$ copies of $U$ that never go above the line $x-y=1$ except before the first move to the right.
    \item Type $B$: sequences of $m$ subpaths that go from $(0, 0)$ to $(x_i+1, x_i)$ consisting of $x_i$ copies of $U$ and $x_i + 1$ copies of $R$ that never go above the line $x-y=1$ except before the first move to the right, such that $x_1 + \dots + x_m = k$ and $x_1, \dots, x_m \geq 0$. 
\end{itemize}
Let $\mathcal{A}$ and $\mathcal{B}$ denote the numbers of Type $A$ paths and Type $B$ sequences.

Type $A$ paths are equivalent to paths from $(0, 0)$ to $(m+k-1, k)$ but shifted one unit to the right and with an extra $R$ attached to the beginning. Therefore, we have
\begin{equation*}
    \mathcal{A} = C(m+k-1, k).    
\end{equation*}

The regular Catalan numbers $C_k$ count paths from $(0, 0)$ to $(k, k)$ that never go above $x-y=0$. Our second quantity $\mathcal{B}$ exactly counts sequences of such paths, again with an extra $R$ attached at the beginning of each path. Therefore, $\mathcal{B}$ is the sum over all possible values of $x_1, \dots, x_m$ of the product of the amounts of possibilities for the first through $m$th path. Then 
\begin{equation*}
    \mathcal{B} = \sum_{\substack{x_1 + x_2 + \dots + x_m = k \\ x_1, \dots, x_m \geq 0}} \prod_{i=1}^{i=m} C_{x_i}.
\end{equation*}
Once we establish the desired bijection, we will have shown $\mathcal{A} = \mathcal{B}$, as desired.

First, we describe how to convert from a Type $A$ path to a Type $B$ sequence. The path must touch the lines $x-y=i$ a last time for every $i$ from $1$ to $m-1$, because the path goes from an initial point $(0, 0)$ for which $x-y = 0-0 = 0$ to a final point $(m+k, k)$ for which $x-y = m+k - k = m$. Say that these points are $(x_1, y_1), \dots, (x_{m-1}, y_{m-1})$. Splitting the path at each of these points, we get a sequence of $m$ subpaths. Since $(x_i, y_i)$ is on the line $x-y = i$, the point can be expressed as $(y_i + i, y_i)$. So the subpath from $(y_i + i, y_i)$ to $(y_{i+1} + i+1, y_{i+1})$ has $(y_{i+1} - y_i)$ copies of $U$ and $(y_{i+1} - y_i + 1)$ copies of $R$. Moreover, the subpath does not go above the line $x-y = i+1$ except the final $R$, so each of the subpaths along this sequence of $m$ subpaths fits the description of Type $B$.

Now, we describe how to convert from a Type $B$ sequence to a Type $A$ path. We simply concatenate the $m$ subpaths into a full path. Each subpath contributes $x_i$ copies of $U$ and $x_i + 1$ copies of $R$, so in total there are $x_1 + \dots + x_m = k$ copies of $U$ and $(x_1 + 1) + \dots + (x_m + 1) = k+m$ copies of $R$. This is exactly how many $U$ and $R$ moves are necessary for a Type $A$ path from $(0, 0)$ to $(m+k, k)$. Further, this concatenated path cannot go above the line $x-y=1$ except at the starting point because each of the $m$ subpaths increase the value of $x-y$ by $1$ from an initial value of $0-0=0$, and the first subpath only has $x-y$ value of $0$ before the very first $R$.

Thus the bijection is established both ways, and $\mathcal{A} = \mathcal{B}$ as desired.
\end{proof}

We can now combine Lemma~\ref{lem:unicard_wlputback_rround} and Lemma~\ref{lem:sumprodcatalan} into Theorem~\ref{thm:mcard_wlputback_rround}.

\begin{theorem}\label{thm:mcard_wlputback_rround}
The probability that an $m$-card game played with WL-putback is $R$-round is 
\begin{equation*}
 \frac{1}{2^R}C\left(\frac{R+m}{2}-1, \frac{R-m}{2}\right).
\end{equation*}
\end{theorem}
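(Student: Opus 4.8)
The plan is to exploit the fact that a single-use $m$-card game splits into $m$ independent single-use unicard games---the blocks induced by Alice's $m$ initial cards---and then to assemble the $m$-card distribution from the unicard distribution of Lemma~\ref{lem:unicard_wlputback_rround} by means of the convolution identity of Lemma~\ref{lem:sumprodcatalan}. First I would fix the arithmetic. For an $m$-card $R$-round game we have $w=\frac{R-m}{2}$ wins and $\ell=\frac{R+m}{2}$ losses; write $k=\frac{R-m}{2}$. Every block is a unicard game, so it ends in an odd number of rounds $r_i=2x_i+1$ with $x_i\geq 0$. Since each round is an edge in exactly one of the $m$ nontrivial trees of the game graph, the total number of rounds is the sum of the blocks' rounds, so $\sum_{i=1}^m r_i=R$ forces $\sum_{i=1}^m x_i=k$. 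Thus an $R$-round outcome, described block by block, is exactly a choice of a composition $x_1+\dots+x_m=k$ with $x_i\geq 0$ together with block $i$ ending in precisely $2x_i+1$ rounds.

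Granting independence (discussed below), I would then multiply the per-block probabilities supplied by Lemma~\ref{lem:unicard_wlputback_rround}. Block $i$ is $(2x_i+1)$-round with probability $\frac{C_{x_i}}{2^{2x_i+1}}$, so a fixed composition occurs with probability $\prod_{i=1}^m \frac{C_{x_i}}{2^{2x_i+1}}=\frac{1}{2^R}\prod_{i=1}^m C_{x_i}$, using $\sum_i(2x_i+1)=R$. Summing over all compositions and applying Lemma~\ref{lem:sumprodcatalan} with parameter $k$ gives
\[
\sum_{\substack{x_1+\dots+x_m=k \\ x_i\geq 0}} \frac{1}{2^R}\prod_{i=1}^m C_{x_i}
= \frac{1}{2^R}\,C(m+k-1,\,k).
\]
Substituting $m+k-1=\frac{R+m}{2}-1$ and $k=\frac{R-m}{2}$ then yields exactly the claimed expression, completing the computation.

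The main obstacle is the step I deferred: justifying that, when the initial $m$-card state is chosen uniformly at random and WL-putback is used, the $m$ blocks behave as \emph{independent}, uniformly random unicard games. This is the genuine crux, because under WL-putback individual rounds are not independent fair coins---this is exactly the non-uniformity that forced the averaging trick in the proof of Lemma~\ref{lem:unicard_wlputback_rround}---and the blocks are interleaved in time while drawing their Bob-cards from a single shared hand. The cleanest route I would take is to reveal Bob's cards in the order in which they are played and to model them as i.i.d.\ continuous uniform values (only relative order matters, and $n$ is taken large enough that the game is single-use). Each round then compares the currently active Alice card against one fresh uniform value; the block active in a given round is a previsible function of the past; and each block's current card is a deterministic function of that block's own past fresh values together with its initial card. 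Hence the $m$ block trajectories are driven by disjoint, independent streams of fresh uniforms and are mutually independent, each distributed exactly as a uniform unicard game, at which point Lemma~\ref{lem:unicard_wlputback_rround} applies to each block separately.

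As an alternative I would appeal to the block-independence already invoked when counting win-loss sequences (the concatenation of blocks' passthroughs and the treatment of the two subblocks spawned by a win as independent). In either case the combinatorial skeleton of the argument is routine; making the probabilistic independence of the blocks precise under WL-putback is where essentially all of the work resides, and I would expect to spend most of the proof on that verification rather than on the summation.
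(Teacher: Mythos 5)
Your proposal is correct and takes essentially the same route as the paper: decompose the single-use $m$-card game into its $m$ unicard blocks, apply Lemma~\ref{lem:unicard_wlputback_rround} to each block, multiply using independence so that a composition $x_1+\dots+x_m=\frac{R-m}{2}$ contributes $\frac{1}{2^R}\prod_i C_{x_i}$, and sum via Lemma~\ref{lem:sumprodcatalan} before substituting $w=\frac{R-m}{2}$. The only divergence is one of emphasis: the paper asserts block independence in a single sentence, while you correctly identify it as the real crux under WL-putback and sketch a sound justification (revealing Bob's cards in played order as i.i.d.\ uniform values driving disjoint block trajectories), which is a legitimate strengthening of the paper's argument rather than a different proof.
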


\begin{proof}
In this proof, we use $w_i$, $\ell_i$, and $R_i$ for the numbers of wins, losses, and rounds in the $i$th block, instead of the numbers of such quantities within the first $i$ rounds. We have $R_i = \ell_i + w_i$, and since blocks are unicard, we have $\ell_i = w_i + 1$. Adding the wins and losses of all blocks, we have $w = w_1 + \dots + w_m$, $\ell = \ell_1 + \dots + \ell_m$, and $R = R_1 + \dots + R_m$. 

By Lemma~\ref{lem:unicard_wlputback_rround}, there is a $\frac{C_{w_i}}{2^{R_i}}$ probability that a particular block consists of exactly $w_i$ wins. Blocks are independent, so the probability of the $m$ blocks having $w_1, w_2, \dots, w_m$ wins is 
\begin{equation*}
    \prod_{i=1}^{i=m} \frac{C_{w_i}}{2^{R_i}} = \frac{1}{2^{R_1 + \dots + R_m}} \prod_{i=1}^{i=m} C_{w_i} = \frac{1}{2^R} \prod_{i=1}^{i=m} C_{w_i}.
\end{equation*}
This must be added over all possible assignments of numbers of wins to the blocks, so we have a total probability of 
\begin{equation*}
     \sum_{w_1 + \dots + w_m = w} \left( \frac{1}{2^R} \prod_{i=1}^{i=m} C_{w_i} \right) = \frac{1}{2^R} \sum_{w_1 + \dots + w_m = w} \prod_{i=1}^{i=m} C_{w_i}.
\end{equation*}
Applying Lemma~\ref{lem:sumprodcatalan}, this simplifies as
\begin{equation*}
    \frac{1}{2^R} \sum_{w_1 + \dots + w_m = w} \prod_{i=1}^{i=m} C_{w_i}  = \frac{1}{2^R} C(m+w-1, w).
\end{equation*}
We have $\ell - w = m$ and $\ell + w = R$, so we have $w = \frac{R-m}{2}$. Thus, 
\begin{equation*}
    \frac{1}{2^R} C(m+w-1, w) = \frac{1}{2^R}C\left(\frac{R+m}{2}-1, \frac{R-m}{2}\right),
\end{equation*}
as claimed.
\end{proof}

\subsection{Counting \texorpdfstring{$k$}{k}-passthrough single-use games}
Previously, our main parameter was the number of rounds. Now, we look at the number of passthroughs. 

In order for every possible $k$-passthrough progression of the game starting from an $m$-card state to result in Alice losing during Bob's first passthrough, $n$ must be sufficiently large. The most rounds that can occur within $k$ passthroughs of Alice is $2^{i-1}m$ for the $i$th passthrough, so summing from $i=1$ to $i=k$, this is $(2^k - 1)m$ rounds. Bob must have at least these many cards, so we assume $n \geq 2^km$ for this subsection.

\begin{theorem}\label{thm:counting_kpassthroughs}
The probability that a game initialized with $m$-card initial state is $k$-pass\-through single-use is $P_k^m$, where $P_k$ is recursively defined by $P_1 = \frac{1}{2}$ and $P_{k+1} = \frac{1}{2} + \frac{1}{2}P_k^2$, both for when
\begin{enumerate}[label=(\alph*)]
    \item the game is played with random putback
    \item and when the game is played with WL-putback.
\end{enumerate}
\end{theorem}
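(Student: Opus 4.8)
The plan is to reduce everything to unicard blocks and then recombine. First I would observe that under the standing assumption $n \ge 2^k m$, a game is $k$-passthrough single-use exactly when Alice loses within $k$ passthroughs, since Bob then always has enough cards that the game cannot fail to be single-use. Recalling from Section~\ref{sec:blocks_and_wlbt} that a single-use $m$-card game splits into $m$ unicard blocks whose passthroughs are concatenated level by level (as in the count $A_k^m$), the combined game finishes within $k$ passthroughs if and only if every one of the $m$ blocks does. So it suffices to show that a single unicard block is $k$-passthrough with probability $P_k$ and that the $m$ blocks contribute independently; the product then yields $P_k^m$.

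For random putback I would argue through win-loss sequences. By Lemma~\ref{lem:uniform_probability} the state after any number of rounds is uniform over states with the correct number of cards, so each round is an independent fair coin and the game follows any fixed valid win-loss sequence $s$ with probability $2^{-R(s)}$, where $R(s)$ is its number of rounds. Hence the probability of a $k$-passthrough game is $\sum_s 2^{-R(s)}$ summed over all $m$-card $k$-passthrough sequences. Since these are in bijection with $m$-tuples of unicard $k$-passthrough sequences (concatenating passthroughs) and rounds add under this bijection, the weight $2^{-R}$ is multiplicative and the sum factors as $f_k^m$, where $f_k = \sum 2^{-R}$ over unicard $k$-passthrough sequences. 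The win-loss-tree description gives the recursion $f_1 = \tfrac12$ and $f_{k+1} = \tfrac12 + \tfrac12 f_k^2$, since the root round is either a terminal $L$ of weight $\tfrac12$ or a $W$ of weight $\tfrac12$ attached to two independent subtrees each of height at most $k$. Thus $f_k = P_k$ and case (a) is complete.

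For WL-putback the per-sequence probabilities are no longer $2^{-R}$, so I would instead induct on $k$ for a unicard block, mirroring Lemma~\ref{lem:unicard_wlputback_rround}. Conditioning on the first round, Bob wins with probability $\tfrac12$, contributing the $\tfrac12$ term. If Alice wins, the state is $a_1 a_2 \mid a_3 \dots a_n$ with $a_1 > a_2$, which is not uniform, and the two cards induce two subblocks; the game is $(k+1)$-passthrough exactly when both subblocks are $k$-passthrough. Writing $A_{i,j}$ for the probability that the top card's subblock is exactly $i$-passthrough and the second card's is exactly $j$-passthrough from this WL state, the key step is the averaging identity $\tfrac12(A_{i,j}+A_{j,i}) = p_i p_j$, where $p_i$ is the probability that a uniform unicard game is exactly $i$-passthrough. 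This holds because a uniform random-putback $2$-card state is an equal mixture of the WL ordering and its card-swap, under which the two subblocks are independent and identically unicard-distributed. Summing over the symmetric index set $\{i,j \le k\}$ and using $\sum_{i\le k} p_i = P_k$ from case (a) gives $\sum_{i,j\le k} A_{i,j} = \bigl(\sum_{i\le k} p_i\bigr)^2 = P_k^2$, hence $P_{k+1} = \tfrac12 + \tfrac12 P_k^2$. Finally I would lift to $m$ cards exactly as in Theorem~\ref{thm:mcard_wlputback_rround}: the $m$ blocks are independent and each is $k$-passthrough with probability $P_k$, so the probability that all finish within $k$ passthroughs is $P_k^m$.

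I expect the main obstacle to be the WL-putback averaging step: justifying $\tfrac12(A_{i,j}+A_{j,i}) = p_i p_j$ rests on the independence of the two subblocks under random putback and on the swap symmetry sending the event for the ordering $b_1 < b_2$ to the transposed event for $b_1 > b_2$. These are the same independence facts invoked in Lemma~\ref{lem:unicard_wlputback_rround} and Theorem~\ref{thm:mcard_wlputback_rround}, so I would reuse rather than reprove them, taking care that the passage from exact passthrough counts to the at-most-$k$ event relies on the symmetry of the index set $\{i,j\le k\}$ and that the combined passthrough number is the maximum over the blocks.
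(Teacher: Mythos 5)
Your proposal is correct, and it keeps the paper's overall skeleton (reduction to independent unicard blocks, conditioning on the first round, and the random-putback thought experiment for WL-putback), but both halves are executed by a somewhat different route. For random putback the paper proves $P_{k+1}=\tfrac12+\tfrac12 P_k^2$ by direct induction on game states: if Alice wins round one, random putback makes $b_1 b_2 | a_3 \dots a_n$ uniform over $2$-card states, the two induced subblocks are independent, and each ends within $k$ passthroughs with probability $P_k$. You instead sum the per-sequence probability $2^{-R}$ (the fair-coin consequence of Lemma~\ref{lem:uniform_probability}, which the paper uses only in its $R$-round random-putback theorem) over all $k$-passthrough win-loss sequences and extract the recursion from the tree structure of sequences; this is equivalent, and it has the virtue of unifying the passthrough count with the paper's sequence/tree machinery ($f_k$ is a probability-weighted version of the paper's $A_k$), at the cost of needing the $n\ge 2^k m$ single-use guarantee, which you correctly note. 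For WL-putback, the paper's proof is your argument in compressed form: it computes $P_k^2$ under the random-putback mixture and disposes of the two putback orders with a one-line ``by symmetry''; your averaging identity $\tfrac12(A_{i,j}+A_{j,i})=p_i p_j$ summed over the symmetric index set $\{i,j\le k\}$ is exactly the device the paper itself deploys in Lemma~\ref{lem:unicard_wlputback_rround}, spelled out at the level of exact passthrough counts, and it rests on the same swap-symmetry and subblock-independence facts that the paper also invokes without full formalization, so you match the paper's level of rigor there. One attribution should be fixed: after the first round the two subblocks are played with \emph{WL-putback}, so the $p_i$ in your identity must be the WL-putback exact-$i$-passthrough probabilities, and $\sum_{i\le k}p_i=P_k$ comes from the part (b) inductive hypothesis rather than ``from case (a)''; since the hypothesis at stage $k$ makes the two putbacks' within-$j$ (hence exact-$i$, by differencing) probabilities agree for $i\le k$, the slip is harmless, but as written the citation points to the wrong law.
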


\begin{proof}
We first show using induction that the claimed probability is correct for unicard games (i.e., when $m=1$). Then, since blocks are independent, we simply multiply the $P_k$ probability for the $m$ blocks to obtain $P_k^m$. Our base case for both random and WL-putback is the $\frac{1}{2}$ chance that a unicard game $a_1|a_2 a_3\dots$ ends on the first passthrough, which occurs when $a_2 > a_1$. 
\begin{enumerate}[label=(\alph*)]
    \item Assume that the probability a unicard block played with random putback is $P_k$. There are two cases we consider for a $(k+1)$-passthrough game: we may have $a_2 > a_1$, so that the game ends in the first passthrough itself, or else we have $a_1 > a_2$, so that we continue to subsequent passthroughs. The first case occurs with probability $\frac{1}{2}$. In the second case, let $b_1$ be the first card put back and let $b_2$ be the second card put back among $a_1$ and $a_2$, so that the state can be written as $b_1 b_2 | a_3 \dots a_n$. This state is now a random permutation of the $n$ cards where Alice has 2 cards. The blocks induced by $a_1$ and $a_2$ are not independent, because we know $a_1 > a_2$. However, $b_1$ and $b_2$ can have either $b_1 > b_2$ or $b_2 > b_1$, so they induce their own, totally independent subblocks. For the original game to be $(k+1)$-passthrough, both of these subblocks must end within $k$ passthroughs. Due to independence and the inductive hypothesis, this occurs with probability $P_k^2$. Combining the $a_2 > a_1$ and $a_1 > a_2$ cases, we have
    \begin{equation*}
        P_{k+1} = \frac{1}{2} + \frac{1}{2} P_k^2,
    \end{equation*}
    as desired.
    \item We follow a strategy very similar to the random putback case. The same base case for 1-passthrough games applies. We wish to show that the probability the remainder of the game from the state $a_1 a_2 | a_3 \dots a_n$ that results from Alice winning the first round ends within $k$ passthroughs is still $P_k^2$, despite the non-independence of the blocks induced by $a_1$ and $a_2$. We begin with the inductive hypothesis that the probability a WL-putback game is $k$-passthrough is $P_k$. Then, let us consider a situation where Alice had done random putback instead of WL-putback for just the first round, so that the state is $b_1 b_2 | a_3 \dots a_n$ where $b_1$ and $b_2$ are $a_1$ and $a_2$ in some order. The subblocks induced by $b_1$ and $b_2$ are independent, so using the inductive hypothesis of the probability of a random initial state being $P_k$, we have that the probability the state $b_1 b_2 | a_3 \dots a_n$ is $(k+1)$-passthrough is $P_k^2$. By symmetry, this probability applies for both possible ways Alice could have put back her cards, and in particular the WL-putback case. Therefore, in the $a_1 > a_2$ case we have a probability $P_k^2$ that the initial game is $k+1$ passthrough. Combining the $a_2 > a_1$ and $a_1 > a_2$ cases as before, we obtain the same recursion, as claimed.
\end{enumerate}
\end{proof}

The first few values in the sequence $P_k$ are $P_2 = \frac{5}{8}$ and $P_3 = \frac{89}{128}$.

\begin{example}
We show the possible permutations for a 2-passthrough unicard game played with WL-putback. Such an initial state has form $a|bcd$. Alice may lose in the first pass\-through, in which case the only constraint is $a<b$. Otherwise, if Alice wins the first round, then Bob must win from the state $ab|cd$. In this case, the constraints are $a>b$, $c>a$, and $d>b$. Out of the 24 initial unicard states on 4 cards, 12 satisfy $a < b$, and additionally $2|143$, $2|134$, and $3|142$ satisfy $a>b$, $c>a$, and $d>b$. Thus, there is a $\frac{15}{24} = \frac{5}{8}$ chance that a game played with WL-putback initialized with random unicard state $a|bcd$ results in Bob winning within two passthroughs of Alice's hand.
\end{example}

\begin{example}
Now, we compute the probability that a game randomly initialized from a state of the form $a|bcd$ and played with random putback is 2-passthrough. The 12 initial states where $a<b$ result in Alice losing immediately on the first round with probability 1. The states $2|134$ and $2|143$ also result in a 2-passthrough game with probability 1, because after she wins the first round and randomly puts her cards back, Bob's remaining cards of $3$ and $4$ beat Alice's cards of $1$ and $2$. For the initial states $3|124$ and $3|142$, the game is only 2-passthrough with the $\frac{1}{2}$ probability depending on how she puts her cards back. The combined probability is $\frac{12 \cdot 1 + 2 \cdot 1 + 2 \cdot \frac{1}{2}}{24} = \frac{15}{24}= \frac{5}{8}$.
\end{example}

\begin{corollary}
The probability a unicard state is $k$-passthrough tends to 1 as $k$ tends to infinity.
\end{corollary}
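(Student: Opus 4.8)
The plan is to read off that the quantity in question is exactly the sequence $P_k$ from Theorem~\ref{thm:counting_kpassthroughs} in the unicard case $m=1$, so the corollary is the assertion that $\lim_{k\to\infty} P_k = 1$. I would prove this by a standard monotone convergence argument: show the sequence $P_k$ defined by $P_1 = \frac{1}{2}$ and $P_{k+1} = \frac{1}{2} + \frac{1}{2}P_k^2$ is nondecreasing and bounded above by $1$, and then identify its limit as the unique fixed point of the recursion.

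First I would establish monotonicity directly from the recursion by computing
\[
P_{k+1} - P_k = \frac{1}{2} + \frac{1}{2}P_k^2 - P_k = \frac{1}{2}(1 - P_k)^2 \geq 0,
\]
so that $P_k$ is nondecreasing. Next I would show $P_k < 1$ for all $k$ by induction: the base case is $P_1 = \frac{1}{2} < 1$, and if $P_k < 1$ then $P_{k+1} = \frac{1}{2} + \frac{1}{2}P_k^2 < \frac{1}{2} + \frac{1}{2} = 1$. Being nondecreasing and bounded above by $1$, the sequence converges to some limit $L \leq 1$. Passing to the limit in the recursion yields $L = \frac{1}{2} + \frac{1}{2}L^2$, equivalently $(L-1)^2 = 0$, so $L = 1$, as desired.

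I expect no serious obstacle here: this is a routine bounded-monotone argument, and the recursion is engineered so that its only fixed point on $[0,1]$ is $1$. The only point meriting minor care is confirming that the limit of an increasing sequence bounded by $1$ must be the fixed point $L=1$ rather than being trapped below it, which is exactly what the identity $(L-1)^2 = 0$ rules out. Everything else reduces to the two elementary inequalities above.
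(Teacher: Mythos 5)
Your proposal is correct and follows essentially the same route as the paper's proof: both derive the identity $P_{k+1} - P_k = \tfrac{1}{2}(1-P_k)^2$ to get monotonicity and then identify the limit as the fixed point of the recursion. If anything, your version is slightly more careful than the paper's, since you explicitly prove boundedness ($P_k < 1$) by induction and correctly note that $L=1$ is the \emph{unique} fixed point, whereas the paper asserts boundedness implicitly and erroneously lists $P=0$ as a second steady-state (it does not satisfy $P = \tfrac{1}{2} + \tfrac{1}{2}P^2$).
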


\begin{proof}
Multiplying both sides of the recurrence relation $P_{k+1} = \frac{1}{2} + \frac{1}{2}P_k^2$ by 2 and subtracting $2P_k$ from both sides yields $2P_{k+1} - 2P_k = 1+P_k^2-2P_k$, or $2(P_{k+1} - P_k) = (1-P_k)^2$. The only steady-states of this recurrence are $P = 0$ and $P=1$, and the sequence starting at $P_1 = \frac{1}{2}$ is monotone increasing. Therefore, the sequence converges to the steady-state $P_{\infty} = 1$. 
\end{proof}

This is a nontrivial result. It was plausible that the chance Alice does not lose within Bob's first passthrough would approach some positive value if high-value initial cards for Alice ``survived'' long enough, but this is not the case.

\section{Winner-to-loser Game Digraphs}\label{sec:wlgamedigraphs}

We can take a game graph and make it a directed graph where each edge points from the card that won to the card that lost a round. In fact, since this is an acyclic directed graph, it constitutes a poset on the cards. Posets can be graphically represented by Hasse diagrams, where greater elements are drawn above lesser elements.

\begin{example}
Consider the game with WL-putback and the win-loss sequence $W/LW/LL$ with initial state $a|bcdef$. The game progresses as
\begin{alignat*}{2}
    &&  a&|bcdef \\
    &\implies& ab&|cdef \\
    &\implies& b&|defca \\
    &\implies& bd&|efca \\
    &\implies& d&|fcaeb \\ 
    &\implies& &|caebfd.
\end{alignat*}
The directed game graph, i.e.\ the poset, is shown in Figure~\ref{fig:wlwll_gameposet}.

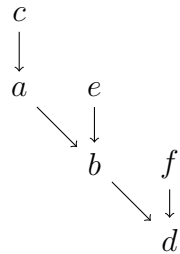
\begin{figure}[ht!]
    \centering
    \begin{tikzpicture}
    \node (a) at (0, 0) {$a$};
    \node (b) at (1, -1) {$b$};
    \node (c) at (0, 1) {$c$};
    \node (d) at (2, -2) {$d$};
    \node (e) at (1, 0) {$e$};
    \node (f) at (2, -1) {$f$};
    
    \draw[->] (b) -- (d);
    \draw[->] (a) -- (b);
    \draw[->] (c) -- (a);
    \draw[->] (e) -- (b);
    \draw[->] (f) -- (d);
    \end{tikzpicture}
    \caption{Game poset for WL-putback on $W/LW/LL$}
    \label{fig:wlwll_gameposet}
\end{figure}
\end{example}

\subsection{Poset for WL-putback}\label{sec:poset_wlputback}

When building each component of the game graph while watching the game progress, each component became a tree. Let us consider adding direction to build the winner-to-loser digraph. The non-leaves in each connected component form a tree poset, because these are cards/rounds that Alice wins from Bob. Leaves in each component are cards/rounds that Bob won with, so the edge direction points opposite to the tree of non-leaves. 

\begin{example}
Considering the win-loss sequence $W/WW/LWWW/LLLLLL$ with the initial state $a|bcdefghijklmn$, we build the winner-to-loser game digraph. The result is shown in Figure~\ref{fig:wwwlwwwllllll_poset}.

\begin{figure}[ht!]
    \centering
    \begin{tikzpicture}
    \node (a) at (0, 0) {$a$};
    \node (b) at (1, -1) {$b$};
    \node (c) at (-1, -1) {$c$};
    \node (d) at (2, -2) {$d$};
    \node (e) at (-1, 1) {$e$};
    \node (f) at (-1, -2) {$f$};
    \node (g) at (0, -2) {$g$};
    \node (h) at (2, -3) {$h$};
    \node (i) at (-2, 0) {$i$};
    \node (j) at (-2, -1) {$j$};
    \node (k) at (2, 0) {$k$};
    \node (l) at (0, -1) {$l$};
    \node (m) at (3, -1) {$m$};
    \node (n) at (3, -2) {$n$};
        
    \draw[thick, dashed, ->] (e) -- (a);
    \draw[thick, dashed, ->] (k) -- (b);
    \draw[thick, dashed, ->] (i) -- (c);
    \draw[thick, dashed, ->] (m) -- (d);
    \draw[thick, dashed, ->] (j) -- (f);
    \draw[thick, dashed, ->] (l) -- (g);
    \draw[thick, dashed, ->] (n) -- (h);
    \draw[ultra thick, ->] (a) -- (b);
    \draw[ultra thick, ->] (b) -- (d);
    \draw[ultra thick, ->] (d) -- (h);
    \draw[ultra thick, ->] (a) -- (c);
    \draw[ultra thick, ->] (c) -- (f);
    \draw[ultra thick, ->] (b) -- (g);
    \end{tikzpicture}
    \caption{Game poset for WL-putback on $W/WW/LWWW/LLLLLL$}
    \label{fig:wwwlwwwllllll_poset}
\end{figure}
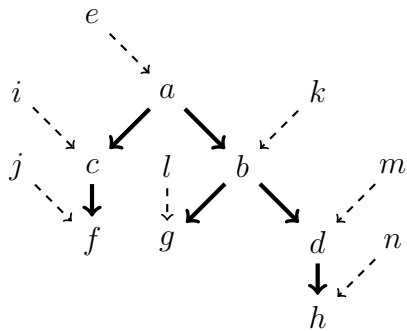

In this Hasse diagram, we have drawn thin dashed edges to connect a card from Bob that beats a card from Alice. Taken alone, the solid edges and the cards they connect form a tree poset structure rooted at $a$. Nodes of this tree poset are the cards that have been in Alice's hand at any point in the game. In the Hasse diagram, each such node has one node above it corresponding to the card from Bob's hand that beat it. These cards from Bob's hand sit outside the tree.
\end{example}

For non-unicard games, the game digraph consists of $m$ connected components, and for each connected component the non-leaves form a tree poset. 

We want to show how to count the number of states that satisfy a win-loss sequence with $n-1$ rounds given WL-putback. But first we need some preliminary discussion.

\subsection{Tree posets}\label{sec:treeposets}
Any tree inherits a natural poset structure, where the root is the largest element. We call this poset a \emph{tree poset}.

A \emph{linear extension} of a finite poset of $n$ elements is an assignment of the integers $1$ through $n$ such that poset relations are satisfied. In the context of War, counting linear extensions corresponds to the number of initial states that satisfy the constraints of a poset on the cards, which is our aim in this section. An element of a poset is said to \emph{cover} a lesser element if there is no element between them.

Let $h(v)$ denote the number of elements less than or equal to $v$ in a tree poset. Then we have the following lemma.

\begin{lemma}[Ruskey \cite{RUSKEY199277}]\label{lem:treeposet_counting}
The number of linear extensions of a tree poset with $n$ elements is 
\begin{equation*}       \frac{n!}{\prod_{i=1}^n h(v_i)}.
\end{equation*}
\end{lemma}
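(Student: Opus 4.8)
The plan is to prove the hook length formula for tree posets by induction on the number of elements $n$, removing the root and splitting into subtrees. This is a classical result (due to Knuth and others, here attributed to Ruskey), and the cleanest self-contained argument counts linear extensions recursively.

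First I would set up notation. Let $T$ be a tree poset with root $r$ (the maximum element) and $n$ elements. In any linear extension, the root must receive the largest label $n$, since $r$ is greater than every other element. So counting linear extensions of $T$ reduces to counting linear extensions of the poset $T \setminus \{r\}$, which is a forest consisting of the subtrees $T_1, \dots, T_d$ rooted at the children of $r$. Let $n_j = |T_j|$, so that $n_1 + \dots + n_d = n-1$.

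The key combinatorial step is that a linear extension of a disjoint union of posets is obtained by first choosing how to distribute the labels $1, \dots, n-1$ among the blocks, and then independently choosing a compatible labeling within each block. The number of ways to partition the label set into groups of sizes $n_1, \dots, n_d$ is the multinomial coefficient $\binom{n-1}{n_1, \dots, n_d} = \frac{(n-1)!}{n_1! \cdots n_d!}$, and within each block $T_j$ the number of order-respecting labelings is exactly the number of linear extensions of $T_j$. Writing $e(T)$ for the number of linear extensions of $T$, this gives the recursion
\begin{equation*}
    e(T) = \frac{(n-1)!}{n_1! \cdots n_d!} \prod_{j=1}^{d} e(T_j).
\end{equation*}
Then I would apply the inductive hypothesis $e(T_j) = n_j! / \prod_{v \in T_j} h(v)$ to each factor. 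The $n_j!$ terms cancel against the multinomial denominator, leaving $e(T) = (n-1)! / \prod_{v \neq r} h(v)$. Finally, observing that $h(r) = n$ (every element is $\le r$), I would write $(n-1)! = n! / n = n! / h(r)$ to absorb the root into the product, yielding $e(T) = n! / \prod_{i=1}^{n} h(v_i)$, as claimed. The base case $n=1$ is immediate, with a single element having $h = 1$ and one linear extension.

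I expect the main obstacle to be justifying the multinomial/independence step cleanly, since this is where the real content lies: one must argue that deleting the root yields a disjoint union of posets with no cross relations (true because in a tree every non-root element has a unique path to the root through exactly one child, so elements in different subtrees are incomparable), and that a linear extension of a disjoint union factors as an interleaving. Everything else is bookkeeping and the cancellation of factorials. I would take care to state explicitly that $h(r) = n$ to make the final absorption step transparent.
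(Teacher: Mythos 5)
Your proof is correct, but it is worth noting that the paper does not actually prove this lemma: it cites Ruskey and offers only a one-line probabilistic heuristic --- a uniformly random assignment of labels is a linear extension iff every vertex $v$ carries the largest label in the subtree below it, each such event has probability $\frac{1}{h(v)}$, and the probabilities are multiplied as though the events were independent. Making that sketch rigorous requires justifying the independence (or an equivalent telescoping of conditional probabilities), and that justification essentially reduces to the induction you carried out. Your route --- root forced to take label $n$, deletion of the root leaving incomparable subtrees $T_1,\dots,T_d$, the interleaving recursion
\begin{equation*}
    e(T) = \binom{n-1}{n_1,\dots,n_d}\prod_{j=1}^{d} e(T_j),
\end{equation*}
cancellation of the $n_j!$ factors, and absorption of the root via $h(r)=n$ --- is the classical self-contained argument, and your justification that distinct subtrees carry no cross relations is exactly the point that makes the multinomial step legitimate. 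The one spot to tighten: when a block $T_j$ receives an arbitrary $n_j$-element subset of $\{1,\dots,n-1\}$ rather than $\{1,\dots,n_j\}$ itself, you should say explicitly that order-respecting labelings by that subset are in bijection with linear extensions of $T_j$ via the unique order isomorphism of label sets; you gesture at this but it deserves a sentence. As for what each approach buys: the paper's probabilistic view is shorter and makes the hook-length shape of the formula transparent at a glance, while your induction yields an actual proof plus a recursion that is independently useful for computing $e(T)$ subtree by subtree.
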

Note that this lemma is similar to the hook-length formula. A quick understanding of this formula arises from the fact that for each $v$, the probability it is the greatest in the subtree where $v$ is the root is $\frac{1}{h(v)}$. Multiplying these probabilities for every vertex, we get the expression in the lemma.

\subsection{Enumerating states with the given WL-sequence}

We use the variable $k$ with $n = 2k$, so that there are $k$ rounds of $L$ and $k-1$ rounds of $W$. Note that each card that ever appears in Alice's hand loses exactly once.

We now finish with a theorem to count linear extensions of the poset, which enumerates initial states that satisfy the win-loss sequence.

\begin{theorem}\label{thm:ns4wlsequence}
The number of $2k$-round single-use WL-putback unicard states $a_1 | a_2 \dots a_{2k}$ that satisfy a particular win-loss sequence  is
\begin{equation*}
    \frac{(2k)!}{2^k \prod_{i=1}^k h(v_i)},
\end{equation*}
where $v_i$'s are vertices in the tree component of the associated poset and $h(v_i)$ is the number of vertices less than or equal to $v_i$ in the tree component of the poset.
\end{theorem}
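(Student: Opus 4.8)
The plan is to count the linear extensions of the game poset $P$ by a probabilistic argument: the number of linear extensions of any poset on $N$ elements equals $N!$ times the probability that a uniformly random ranking of its elements respects every order relation. Here $N=2k$, so I would aim to show that a random ranking is consistent with $P$ with probability exactly $\frac{1}{2^k\prod_{i=1}^k h(v_i)}$, and then multiply by $(2k)!$.

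First I would pin down the structure of $P$. As described in Section~\ref{sec:poset_wlputback}, the $k$ cards that ever enter Alice's hand form a tree poset $T$ rooted at $a_1$, whose $k-1$ covering relations come from the $W$-rounds; and each of the $k$ cards that beats one of Alice's cards is a distinct maximal ``cap'' vertex sitting directly above the tree node it beat, one cap per $L$-round. Thus $P$ has exactly $2k$ vertices, and its relations are generated by (i) the tree relations among the labels $x_1,\dots,x_k$ of the tree vertices and (ii) the $k$ relations $y_i>x_i$, where $y_i$ is the label of the cap above $v_i$. A random ranking respects $P$ if and only if it respects all of these generating relations.

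The key step is the probability computation. Assign i.i.d.\ uniform labels to all $2k$ vertices. The cap constraints $y_i>x_i$ involve $k$ disjoint pairs of labels, so they are mutually independent, each holding with probability $\tfrac12$; together they hold with probability $2^{-k}$. Conditioning on all $k$ cap constraints, the pairs remain independent and each $x_i$ becomes the minimum of two independent uniforms. Crucially, the resulting $x_1,\dots,x_k$ are i.i.d.\ continuous random variables, so their relative order is a uniformly random permutation; hence the probability they respect the tree order equals $\frac{\#\{\text{linear extensions of }T\}}{k!}=\frac{1}{\prod_{i=1}^k h(v_i)}$ by Lemma~\ref{lem:treeposet_counting}. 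Since the tree-order event depends only on the $x_i$'s, it multiplies with the cap probability, giving total probability $2^{-k}\cdot\frac{1}{\prod_{i=1}^k h(v_i)}$ and therefore $\frac{(2k)!}{2^k\prod_{i=1}^k h(v_i)}$ linear extensions, as claimed.

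The main obstacle — the step deserving the most care — is the reduction to independent constraints. I must verify that $P$ is generated \emph{exactly} by the tree edges and the cap edges and that no extra comparabilities are forced; this relies on the single-use hypothesis, which guarantees that each Alice card loses exactly once to a distinct, never-reused Bob card, so each cap is incomparable to everything except its own tree node. The second delicate point is the observation that conditioning on the cap inequalities leaves the tree-vertex labels i.i.d.\ continuous; this is precisely what lets me invoke the distribution-free fact that the relative order of i.i.d.\ continuous variables is uniform, and thereby apply Ruskey's formula unchanged to the minima rather than to plain uniforms.
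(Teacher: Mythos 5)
Your proof is correct and yields exactly the claimed count, but it takes a probabilistic route where the paper's own proof is a direct two-stage bijective count. The paper first chooses one of $\frac{(2k)!}{k!\,2^k}$ partitions of the $2k$ card values into $k$ unordered pairs, stipulating that the smaller value of each pair labels a tree vertex and the larger labels the cap above it, and then places the $k$ smaller values on the tree in one of $\frac{k!}{\prod_{i=1}^k h(v_i)}$ order-preserving ways via Lemma~\ref{lem:treeposet_counting}; multiplying the two factors gives the formula, with no independence argument needed because a valid labeling determines the pairing and the tree placement uniquely, and conversely. Your $2^{-k}$ for the $k$ disjoint cap constraints is the probabilistic shadow of the paper's pairing count, and your exchangeability step is the shadow of its second stage; what your route buys is an explicit justification of why the two factors multiply (independence of the disjoint cap events, plus the fact that conditioning leaves the tree labels i.i.d.\ continuous, so their relative order is uniform and Ruskey's formula applies to the minima), and it is very much in the spirit of the paper's own probabilistic gloss on Lemma~\ref{lem:treeposet_counting}. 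What the paper's route buys is brevity and the avoidance of the continuous-label device. Both arguments rest on the structural facts you rightly flag as the delicate point: in a single-use game each card that enters Alice's hand loses exactly once to a distinct Bob card that never replays, so the poset is generated precisely by the $k-1$ tree covers and a perfect matching of $k$ cap covers, with each cap maximal and comparable only to its own tree vertex (and what lies below it). One small reading note: the theorem's ``$2k$-round'' should be understood as a $2k$-card state --- the game lasts $2k-1$ rounds, with $k$ losses and $k-1$ wins --- which is consistent with your count of $k$ tree vertices and $k$ caps.
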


\begin{proof}
There are $\frac{(2k)!}{k! \cdot 2^k}$ partitionings of $2k$ cards into $k$ pairs, where the lesser element is in the tree component and the greater element sticks outside the tree. As described earlier, the number of linear extensions of just the tree component of the poset is
\begin{equation*}
    \frac{k!}{\prod_{i=1}^k h(v_i)}.
\end{equation*}
Multiplying these terms, we get
\begin{equation*}
    \frac{(2k)!}{k! \cdot 2^k} \cdot \frac{k!}{\prod_{i=1}^k h(v_i)} = \frac{(2k)!}{2^k\prod_{i=1}^k h(v_i)},
\end{equation*}
as claimed.
\end{proof}

We illustrate the theorem with an example. 

\begin{example} 
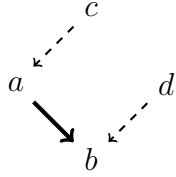
\begin{figure}[ht!]
    \centering
    \begin{tikzpicture}
    \node (a) at (0, 0) {$a$};
    \node (b) at (1, -1) {$b$};
    \node (c) at (1, 1) {$c$};
    \node (d) at (2, 0) {$d$};
    \draw[ultra thick, ->] (a) -- (b);
    \draw[thick, dashed, ->] (c) -- (a);
    \draw[thick, dashed, ->] (d) -- (b);
    \end{tikzpicture}
    \caption{Game poset for WL-putback on $W/LL$}
    \label{fig:wll_gameposet}
\end{figure}

Consider the game with initial state $a|bcd$ with win-loss sequence $W/LL$, played with WL-putback. We have four cards, so $k=2$. The game poset is shown in Figure~\ref{fig:wll_gameposet}. Within the tree component of the poset, consisting of $a$ and $b$, we have $h(a) = 2$ and $h(b) = 1$. Plugging these values into formula in Theorem~\ref{thm:ns4wlsequence}, we have 
\begin{equation*}
    \frac{4!}{2^2 \cdot 2 \cdot 1} = \frac{24}{8} = 3.
\end{equation*}
These three initial states are $2|134$, $2|143$, and $3|142$.
\end{example} 

\begin{remark}
We can enumerate $m$-card states with a given win-loss sequence. Following the win-loss sequence breaks the game up into $m$ unicard blocks. We apply Theorem~\ref{thm:ns4wlsequence} to each of the $m$ blocks and multiply the possibilities together. 
\end{remark}

\section{Blocks and Random Putback}\label{sec:randomputbackposet}

In this section, we want to count unicard states of the form $a_1 | a_2 \dots a_n$ that \emph{necessarily} follow a certain win-loss sequence under random putback. We first describe how to construct the poset associated with the win-loss sequence, and then show how to count its linear extensions. When dealing with an $m$-card rather than unicard state, the claimed method of constructing the poset applies to each unicard block, after which we can simply multiply the possibilities for each block.

It is important to note the word ``necessarily'' in a state \emph{necessarily} following a win-loss sequence. A given initial state can follow multiple possible win-loss sequences given different possibilities in random putback. For example, take the initial state $3|14256$. Alice wins the first round, and the state becomes either $31|4256$ or $13|4256$. In the first case, Alice loses the next two rounds, so the win-loss sequence is $W/LL$. In the second case, Alice loses and then wins, so that she has the cards $3$ and $2$ in some order. Both of these cards are less than both of $5$ and $6$, so Alice loses the next two rounds. This case follows the win-loss sequence $W/LW/LL$. Therefore, this initial state $3|14256$ does not necessarily follow any win-loss sequence.

\subsection{Building the random putback poset}

\begin{example}
Suppose an initial state $a|bcd$ is to necessarily follow win-loss sequence $W/LL$. Then, necessarily $a > b$ to get to the state $ab|cd$ or $ba|cd$, after which both $a$ and $b$ are less than $c$ and $d$ to necessarily have two $L$'s. The only initial states that satisfy these constraints are $2|143$ and $2|134$. The relationships are summarized in the poset in Figure~\ref{fig:wll_poset}.

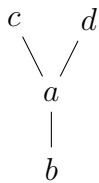
\begin{figure}[ht!]
    \centering
    \begin{tikzpicture}
    \node (a) at (0, 0) {$a$};
    \node (b) at (0, -1) {$b$};
    \node (c) at (-0.5, 1) {$c$};
    \node (d) at (0.5, 1) {$d$};
    \draw (c) -- (a) -- (b);
    \draw (d) -- (a);
    \end{tikzpicture}
    \caption{Poset for random putback on $W/LL$}
    \label{fig:wll_poset}
\end{figure}
\end{example}

Now, we describe the general method of obtaining the poset.
\begin{enumerate}
    \item Initialize the poset with a node for card $a$.
    \item At each node of a win-loss binary tree, write down which of Bob's cards plays against Alice in the round.
    \item Prune all leaves of the win-loss binary tree and put each corresponding card from the leaves as a node above $a$ in the Hasse diagram.
    \item Take the remainder of the tree, consisting of Bob's cards involved in $W$ rounds, and place the entire structure under $a$ in the Hasse diagram.
\end{enumerate}

The algorithm is  illustrated in Figure~\ref{fig:wlbt_poset_wwwlwwwllllll} using the example of the win-loss sequence $W/WW/LWWW/LLLLLL$ on the initial state $a|bcd\dots mn$. Card $a$ initializes the poset. Then, cards $e$, $i$, $j$, $k$, $l$, $m$, and $n$, which participated in $L$ rounds, are placed above $a$. The remainder of the binary tree, consisting of $b$, $c$, $d$, $f$, $g$, and $h$, is placed under $a$.

\begin{figure}[ht!]
    \centering
    \begin{subfigure}{0.63\textwidth}
        \scalebox{0.8}{
        \begin{forest}
            [$W(b)$
            [$W(c)$
            [$L(e)$]
            [$W(f)$[$L(i)$][$L(j)$]]
            ]
            [$W(d)$
            [$W(g)$[$L(k)$][$L(l)$]]
            [$W(h)$[$L(m)$][$L(n)$]]
            ]
            ]
        \end{forest}}
    \end{subfigure}
    \begin{subfigure}{0.36\textwidth}
        \scalebox{0.8}{
        \begin{tikzpicture}
            \node (a) at (0, 0) {$a$};
            \node (e) at (-3, 1) {$e$};
            \node (i) at (-2, 1) {$i$};
            \node (j) at (-1, 1) {$j$};
            \node (k) at (0, 1) {$k$};
            \node (l) at (1, 1) {$l$};
            \node (m) at (2, 1) {$m$};
            \node (n) at (3, 1) {$n$};
            \node (b) at (0, -1) {$b$};
            \node (c) at (-0.5, -2) {$c$};
            \node (d) at (1, -2) {$d$};
            \node (f) at (-0.5, -3) {$f$};
            \node (g) at (0.5, -3) {$g$};
            \node (h) at (1.5, -3) {$h$};
            
            \draw (m) -- (a) -- (n);
            \draw (e) -- (a);
            \draw (i) -- (a) -- (j);
            \draw (k) -- (a) -- (l);
            \draw (k) -- (a) -- (l);
            \draw (a) -- (b) -- (c) -- (f);
            \draw (b) -- (d) -- (g);
            \draw (d) -- (h);
        \end{tikzpicture}}
    \end{subfigure}
    \caption{Win-loss binary tree and poset with random putback for necessarily following $W/WW/LWWW/LLLLLL$}
    \label{fig:wlbt_poset_wwwlwwwllllll}
\end{figure}
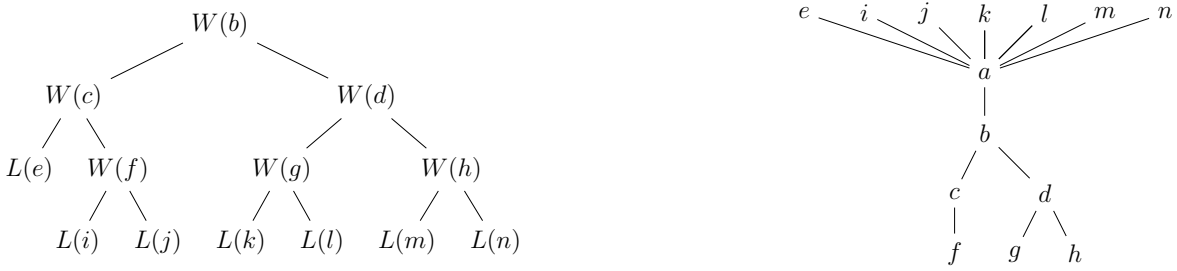

Now, we explain why the algorithm for writing the Hasse diagram works. Take a $W$ round where Alice's card $x$ beats Bob's card $y$. Each such $y$ then appears only in the subblock induced by that round. This subblock is exactly the subtree rooted at that round in the win-loss binary tree. This $y$ can appear anywhere in the subblock until the end of the subblock due to random putback; if put back first, $y$ begins a subblock corresponding to the left child subtree of the $x/y$ round; if put back second, $y$ begins a subblock corresponding to the right child subtree of the $x/y$ round. Therefore, every one of Bob's cards involved in subsequent $W$ in the subblock induced by $x/y$ must be less than $y$ for the $W$ to necessarily occur. This continues recursively through all $W$'s: each one of Bob's cards involved in a $W$ must be less than each of Bob's cards involved in a $W$ higher up in the tree. Moreover, card $a$ must be greater than every card played by Bob in a $W$ round, because $a$ can be anywhere in Alice's hand at every point in the game. This is exactly what we do when taking the structure of $W$'s in the binary tree and placing it under $a$ in the Hasse diagram.

Again, card $a$ can be anywhere in Alice's stack at every point in the game. Therefore, for $L$'s to occur necessarily, cards Bob plays in $L$ rounds must be greater than $a$. This is what we do when pruning $L$'s and putting the card Bob plays in those rounds above $a$ in the Hasse diagram.

\subsection{Enumerating states}

We now enumerate states that are guaranteed to follow a particular win-loss sequence under random putback by counting linear extensions of the poset. 

\begin{theorem}
The number of single-use random putback games with initial unicard state $a_1 | a_2 \dots a_{2k}$ that necessarily satisfy a particular win-loss sequence is 
\begin{equation*}
    \frac{(k!)^2}{\prod_{i=1}^k h(v_i)},
\end{equation*}
where $v_i$'s are vertices in the bottom tree component of the associated poset and $h(v_i)$ is the number of vertices less than or equal to $v_i$.
\end{theorem}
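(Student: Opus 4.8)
The plan is to read off the shape of the random-putback poset built in the previous subsection and then count its linear extensions, using the fact (established there) that linear extensions of this poset are in bijection with the initial unicard states that \emph{necessarily} follow the given win-loss sequence. First I would record why that bijection holds: since the game is single-use, Bob never cycles through his hand, so his $j$-th card $a_{j+1}$ always plays in round $j$ no matter how random putback resolves. Hence each of the $2k$ cards occupies a fixed position in the initial state, and assigning values to these fixed roles is exactly choosing a linear extension. By the construction of the poset, it consists of a bottom tree component $T$ on $k$ vertices (the root $a$ together with the $k-1$ cards Bob plays in $W$-rounds), with $a$ as its maximum, and a top antichain $A$ consisting of the $k$ cards Bob plays in $L$-rounds, each placed above $a$.

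The key observation I would make is that every element of $A$ lies above $a = \max T$, so by transitivity every element of $A$ exceeds \emph{every} element of $T$; the two blocks are therefore totally separated. Consequently, in any linear extension the $k$ vertices of $T$ must receive the values $1, \dots, k$ and the $k$ vertices of $A$ must receive the values $k+1, \dots, 2k$. This splits the count as the product of the number of linear extensions of $T$ and the number of orderings of $A$.

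For the first factor, $T$ is a tree poset, so Lemma~\ref{lem:treeposet_counting} gives $\frac{k!}{\prod_{i=1}^k h(v_i)}$; here the values $h(v_i)$ computed inside $T$ agree with those computed in the full poset, precisely because no element of $A$ lies below any tree vertex, which matches the wording of the statement. For the second factor, $A$ is an antichain on $k$ elements and contributes $k!$. Multiplying gives $\frac{(k!)^2}{\prod_{i=1}^k h(v_i)}$, as claimed.

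I expect the one genuine subtlety — as opposed to the routine multiplication — to be the separation argument: one must verify that the top cards dominate the \emph{entire} tree and not merely its root, which is what produces the clean factorization. This is exactly the feature distinguishing the random-putback poset from the WL-putback poset of Theorem~\ref{thm:ns4wlsequence}, where each outside card sits above only a single tree vertex and the values do not separate, forcing the extra pairing factor $\frac{(2k)!}{k!\,2^k}$ there. I would also state the linear-extension--to--initial-state bijection explicitly, since it is what lets the linear-extension count answer the enumeration question.
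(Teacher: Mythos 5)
Your proposal is correct and follows essentially the same route as the paper's proof: split the poset into the top layer of $k$ cards from $L$-rounds (which must receive the values $k+1,\dots,2k$, giving a factor of $k!$) and the bottom tree of $k$ elements rooted at $a$, counted via Lemma~\ref{lem:treeposet_counting}. Your added justifications --- the explicit linear-extension-to-initial-state bijection via Bob's fixed card order, and the transitivity argument that the top cards dominate the entire tree --- are sound elaborations of steps the paper treats as immediate, not a different approach.
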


\begin{proof}
The Hasse diagram consists of two components: a tree below and including $a$ (that happens to be a not-necessarily-full binary tree), and a tree above and including $a$ (which has height 2). The top layer of the poset has as many elements as $L$'s in the win-loss sequence, namely $k$ (recall $n=2k$). These must be the values $k+1$ through $n$, because they are greater than every other element of the poset, so there are $k!$ assignments of values to cards in the top layer. For the tree consisting of the remaining $k$ cards from card $a$ and down, we use Lemma~\ref{lem:treeposet_counting}  for a factor of
\begin{equation*}
    \frac{k!}{\prod_{i=1}^k h(v_i)}
\end{equation*}
again. Combining these two factors, we get
\begin{equation*}
    k! \cdot \frac{k!}{\prod_{i=1}^k h(v_i)} = \frac{(k!)^2}{\prod_{i=1}^k h(v_i)},
\end{equation*}
as claimed.
\end{proof}

\section{Acknowledgements}
We would like to thank the MIT PRIMES-USA program for the opportunity to conduct this research.

\printbibliography

\end{document}